\documentclass[12pt,lenq]{amsart}
\usepackage{amsmath}
\usepackage{amscd}
\usepackage{amssymb}
\usepackage{amsbsy}
\usepackage{amsfonts}
\usepackage{latexsym}
\usepackage{graphics}
\usepackage{amsmath,amscd,latexsym}
\usepackage{multirow}
\usepackage{array}
\usepackage{paralist}
\usepackage{titletoc}

\pagestyle{plain}

 \setlength{\textheight}{19cm}
 \setlength{\textwidth}{5.5in}
 \hoffset -0.50  true cm
 \voffset 1.00  true cm

\theoremstyle{plain}
\newtheorem{theorem}{Theorem}[section]
\newtheorem{proposition}[theorem]{Proposition}
\newtheorem{lemma}[theorem]{Lemma}
\newtheorem{corollary}[theorem]{Corollary}
\newtheorem{remark}[theorem]{Remark}
\newtheorem{definition}[theorem]{Definition}

\newtheorem{main theorem}[theorem]{Main Theorem}

\newtheorem{convention}[theorem]{Convention}

\newcommand{\interior}{\operatorname{int}}

\newcommand{\NN}{\mathbb{N}}
\newcommand{\ZZ}{\mathbb{Z}}
\newcommand{\QQ}{\mathbb{Q}}
\newcommand{\RR}{\mathbb{R}}
\newcommand{\CC}{\mathbb{C}}
\newcommand{\HH}{\mathbb{H}}
\newcommand{\QQQ}{\hat{\mathbb{Q}}}

\newcommand{\Conway}{\mbox{\boldmath$S$}^{2}}
\newcommand{\Conways}
{(\mbox{\boldmath$S$}^{2},\mbox{\boldmath$P$})}
\newcommand{\PP}{\mbox{\boldmath$P$}}
\newcommand{\PConway}{\mbox{\boldmath$S$}}

\newcommand{\OO}{\mbox{\boldmath$O$}}

\newcommand{\rtangle}[1]{(B^3,t({#1}))}

\newcommand{\DD}{\mathcal{D}}
\newcommand{\RGPC}[2]{\Gamma({#1};{#2})}

\newcommand{\RGP}[1]{\Gamma_{#1}}

\newcommand{\Hecke}{\mbox{$G$}}
\newcommand{\orbo}{\mbox{\boldmath$O$}}
\newcommand{\orbs}{\mbox{\boldmath$S$}}
\newcommand{\orbb}{\mbox{\boldmath$B$}}

\newcommand{\cfr}{\mbox{\boldmath$a$}}

\newcommand{\svert}{\,|\,}

\newcommand{\llangle}{\langle\langle}
\newcommand{\rrangle}{\rangle\rangle}

\begin{document}

\title{Epimorphisms from 2-bridge link groups onto Heckoid groups (I)}

\author{Donghi Lee}
\address{Department of Mathematics\\
Pusan National University \\
San-30 Jangjeon-Dong, Geumjung-Gu, Pusan, 609-735, Republic of Korea}
\email{donghi@pusan.ac.kr}

\author{Makoto Sakuma}
\address{Department of Mathematics\\
Graduate School of Science\\
Hiroshima University\\
Higashi-Hiroshima, 739-8526, Japan}
\email{sakuma@math.sci.hiroshima-u.ac.jp}

\subjclass[2010]{Primary 57M25, 57M50 \\
\indent {The first author was supported by Basic Science Research Program
through the National Research Foundation of Korea(NRF) funded
by the Ministry of Education, Science and Technology(2012R1A1A3009996).
The second author was supported
by JSPS Grants-in-Aid 22340013.}}

\begin{abstract}
Riley ``defined'' the Heckoid groups for $2$-bridge links
as Kleinian groups, with nontrivial torsion,
generated by two parabolic transformations,
and he constructed an infinite family of
epimorphisms from $2$-bridge link groups onto Heckoid groups.
In this paper, we make Riley's definition explicit,
and give a systematic construction
of epimorphisms from $2$-bridge link groups onto
Heckoid groups, generalizing Riley's construction.
\end{abstract}
\maketitle

\begin{center}
{\it In honour of J. Hyam Rubinstein
and his contribution to mathematics}
\end{center}

\section{Introduction}

In \cite{Riley2},
Riley introduced an infinite collection of Laurent polynomials,
called the Heckoid polynomials,
associated with a $2$-bridge link $K$,
and observed, through extensive computer experiments,
that these Heckoid polynomials define the affine representation variety
of certain groups, the Heckoid groups for $K$.
To be more precise, he ``defines''
the Heckoid group of index $q\ge 3$ for $K$
to be a Kleinian group generated by two parabolic transformations
which are obtained by choosing a ``right'' root of the Heckoid polynomials
(see \cite[the paragraph following Theorem ~A in p.390]{Riley2}).
The classical Hecke groups,
introduced in \cite{Hecke}, are essentially the simplest Heckoid groups.
Riley discussed relations of the Heckoid polynomials with the polynomials
defining the nonabelian $SL(2,\CC)$-representations
of $2$-bridge link groups introduced in \cite{Riley1},
and proved that each Heckoid polynomial divides
the nonabelian representation polynomials of $2$-bridge links $\tilde K$,
where $\tilde K$ belongs to an infinite collection
of $2$-bridge links determined by $K$ and the index $q$.
This suggests that there are epimorphisms from the link group of $\tilde K$
onto the Heckoid group of index $q$ for $K$,
as observed in \cite[the paragraph following Theorem ~B in p.391]{Riley2}.

The purpose of this paper is
(i) to give an explicit combinatorial definition of the Heckoid groups
for $2$-bridge links (Definition ~\ref{def:odd_Heckoid_group}),
(ii) to prove that the Heckoid groups are identified with
Kleinian groups generated by two parabolic transformations
(Theorem ~\ref{thm.Kleinian_heckoid}),
and
(iii) to give a systematic construction
of epimorphisms from $2$-bridge link groups onto
Heckoid groups, generalizing Riley's construction
(Theorem ~\ref{thm:epimorophism} and Remark ~\ref{rem:Riley's_theorem}).

We note that the results (i) and (ii) are essentially contained in
the work of Agol ~\cite{Agol}, in which he announces a complete
classification of the non-free Kleinian groups generated by two-parabolic transformations.
Moreover, this classification theorem gives a nice characterization of
the Heckoid groups, by showing that they are exactly the Kleinian groups,
with nontrivial torsion,
generated by two-parabolic transformations.

The result (iii) is an analogy of
the systematic construction of epimorphisms
between $2$-bridge link groups given in
\cite[Theorem ~1.1]{Ohtsuki-Riley-Sakuma}.
In the sequel \cite{lee_sakuma_7} of this paper,
we prove, by using small cancellation theory, that
the epimorphisms in Theorem ~\ref{thm:epimorophism}
are the only upper-meridian-pair-preserving epimorphisms
from $2$-bridge link groups onto {\it even} Heckoid groups.
This in turn forms
an analogy of \cite[Main Theorem ~2.4]{lee_sakuma},
which gives a complete characterization
of upper-meridian-pair-preserving epimorphisms
between $2$-bridge link groups.

This paper is organized as follows.
In Section ~\ref{statements},
we describe the main results.
In Section ~\ref{fricke_surfaces},
we give an explicit combinatorial definition of Heckoid groups.
Sections ~\ref{sec:proof_if_part}, \ref{sec:topological-description} and
\ref{sec:Kleinian-Heckod-groups}, respectively, are devoted to
the proof of Theorem ~\ref{thm:epimorophism},
the topological description of Heckoid orbifolds,
and the proof of Theorem ~\ref{thm.Kleinian_heckoid}.

Throughout this paper,
we denote the orbifold fundamental group of an orbifold $X$
by $\pi_1(X)$.

\section{Main results}
\label{statements}

Consider the discrete group, $H$, of isometries
of the Euclidean plane $\RR^2$
generated by the $\pi$-rotations around
the points in the lattice $\ZZ^2$.
Set $\Conways:=(\RR^2,\ZZ^2)/H$
and call it the {\it Conway sphere}.
Then $\Conway$ is homeomorphic to the 2-sphere,
and $\PP$ consists of four points in $\Conway$.
We also call $\Conway$ the Conway sphere.
Let $\PConway:=\Conway-\PP$ be the complementary
4-times punctured sphere.
For each $s \in \QQQ:=\QQ\cup\{\infty\}$,
let $\alpha_s$ be the simple loop in $\PConway$
obtained as the projection of a line in $\RR^2-\ZZ^2$
of slope $s$.
Then $\alpha_s$ is {\it essential} in $\PConway$,
i.e., it does not bound a disk in $\PConway$
and is not homotopic to a loop around a puncture.
Conversely, any essential simple loop in $\PConway$
is isotopic to $\alpha_s$ for
a unique $s\in\QQQ$.
Then $s$ is called the {\it slope} of the simple loop.
We abuse notation to denote by $\alpha_s$
the pair of conjugacy classes in $\pi_1(\PConway)$
represented by the loop $\alpha_s$ with two possible orientations.

A {\it trivial tangle} is a pair $(B^3,t)$,
where $B^3$ is a 3-ball and $t$ is a union of two
arcs properly embedded in $B^3$
which is simultaneously
parallel to a union of two
mutually disjoint arcs in $\partial B^3$.
Let $\tau$ be the simple unknotted arc in $B^3$
joining the two components of $t$
as illustrated in Figure ~\ref{fig.trivial-tangle}.
We call it the {\it core tunnel} of the trivial tangle.
Pick a base point $x_0$ in $\interior \tau$,
and let $(\mu_1,\mu_2)$ be the generating pair
of the fundamental group $\pi_1(B^3-t,x_0)$
each of which is represented by
a based loop consisting of
a small peripheral simple loop
around a component of $t$ and
a subarc of $\tau$ joining the circle to $x_0$.
For any base point $x\in B^3-t$,
the generating pair of $\pi_1(B^3-t,x)$
corresponding to the generating pair $(\mu_1,\mu_2)$
of $\pi_1(B^3-t,x_0)$ via a path joining $x$ to $x_0$
is denoted by the same symbol.
The pair $(\mu_1,\mu_2)$ is unique up to
(i) reversal of the order,
(ii) replacement of one of the members with its inverse,
and (iii) simultaneous conjugation.
We call the equivalence class of $(\mu_1,\mu_2)$
the {\it meridian pair} of $\pi_1(B^3-t)$.

\begin{figure}
\begin{center}
\includegraphics{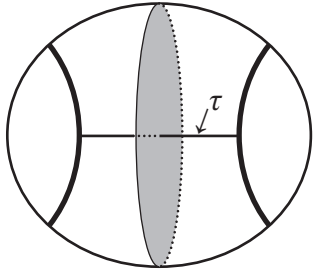}
\end{center}
\caption{\label{fig.trivial-tangle}
A trivial tangle}
\end{figure}

By a {\it rational tangle},
we mean a trivial tangle $(B^3,t)$
which is endowed with a homeomorphism from
$\partial(B^3,t)$ to $\Conways$.
Through the homeomorphism we identify
the boundary of a rational tangle with the Conway sphere.
Thus the slope of an essential simple loop in
$\partial B^3-t$ is defined.
We define
the {\it slope} of a rational tangle
to be the slope of
an essential loop on $\partial B^3 -t$ which bounds a disk in $B^3$
separating the components of $t$.
(Such a loop is unique up to isotopy
on $\partial B^3 -t$ and is called a {\it meridian}
of the rational tangle.)
We denote a rational tangle of slope $r$ by
$\rtangle{r}$.
By van Kampen's theorem, the fundamental group
$\pi_1(B^3-t(r))$ is identified
with the quotient
$\pi_1(\PConway)/\llangle\alpha_r \rrangle$,
where $\llangle\alpha_r \rrangle$ denotes the normal
closure.

For each $r\in \QQQ$,
the {\it 2-bridge link $K(r)$ of slope $r$}
is defined to be the sum of the rational tangles of slopes
$\infty$ and $r$, namely,
$(S^3,K(r))$ is
obtained from $\rtangle{\infty}$ and
$\rtangle{r}$
by identifying their boundaries through the
identity map on the Conway sphere
$\Conways$. (Recall that the boundaries of
rational tangles are identified with the Conway sphere.)
$K(r)$ has one or two components according as
the denominator of $r$ is odd or even.
We call $\rtangle{\infty}$
and $\rtangle{r}$, respectively,
the {\it upper tangle} and {\it lower tangle}
of the 2-bridge link.
By van Kampen's theorem, the link group $G(K(r))=\pi_1(S^3-K(r))$ is obtained as follows:
\[
G(K(r))=\pi_1(S^3-K(r))
\cong \pi_1(\PConway)/ \llangle\alpha_{\infty},\alpha_r\rrangle
\cong \pi_1(B^3-t(\infty))/\llangle\alpha_r\rrangle.
\]
We call the image in the link group
of the meridian pair of $\pi_1(B^3-t(\infty))$
(resp. $\pi_1(B^3-t(r))$)
the {\it upper meridian pair} (resp. {\it lower meridian pair}).

\begin{figure}[h]
\begin{center}
\includegraphics{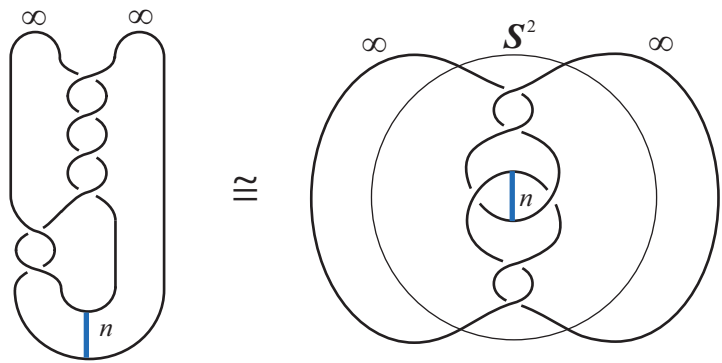}
\end{center}
\caption{
\label{fig.Hekoid-orbifold}
The even Heckoid orbifold $\orbs(r;n)$
of index $n$ for the $2$-bridge link $K(r)$,
where we employ Convention ~\ref{conv:orbifold}.
Here $(S^3,K(r))=(B^3,t(\infty))\cup (B^3,t(r))$ is the $2$-bridge link
with $r=2/9=[4,2]$ (with a single component).
The rational tangles $(B^3,t(\infty))$ and $(B^3,t(r))$, respectively,
are the outside and the inside of the bridge sphere $\Conway$.
}
\end{figure}

For a rational number $r$ ($\ne \infty$)
and an integer $n\ge 2$,
the ({\it even}) {\it Heckoid orbifold, $\orbs(r;n)$,
of index $n$ for the $2$-bridge link $K(r)$}
is the $3$-orbifold as shown in Figure ~\ref{fig.Hekoid-orbifold}.
Namely, the underlying space $|\orbs(r;n)|$ is $E(K(r))$
and the singular set is the {\it lower} tunnel,
where the index of singularity is $n$.
Here, the lower tunnel means the core tunnel of the lower tangle.
the core tunnel
The ({\it even}) {\it Hekoid group} $\Hecke(r;n)$
is defined to be the orbifold fundamental group $\pi_1(\orbs(r;n))$.
By van Kampen's theorem for orbifold fundamental groups
(cf. \cite[Corollary ~2.3]{Boileau-Porti}),
we have
\[
\Hecke(r;n)
\cong\pi_1(\PConway)/ \llangle\alpha_{\infty},\alpha_r^n\rrangle
\cong \pi_1(B^3-t(\infty))/\llangle\alpha_r^n\rrangle.
\]
In particular,
the even Heckoid group $\Hecke(r;n)$ is a two-generator and one-relator group.
We call the image in $\Hecke(r;n)$
of the meridian pair of $\pi_1(B^3-t(\infty))$
the {\it upper meridian pair}.

The announcement by Agol ~\cite{Agol}
and the announcement
made in the second author's joint work with
Akiyoshi, Wada and Yamashita ~\cite[Section ~3 of Preface]{ASWY}
(cf. Remark ~\ref{rem:ASWY})
suggest that the group $\Hecke(r;n)$ makes sense
even when $n$ is a half-integer greater than $1$.
The precise definition of $\Hecke(r;n)$ with $n>1$ a half-integer
is given in Definition ~\ref{def:odd_Heckoid_group},
and a topological description of the corresponding orbifold, $\orbs(r;n)$,
is given by Proposition ~\ref{prop:odd-Heckoid-orbifold}
(see Figures ~\ref{odd-Heckoid-orbifold1} and ~\ref{odd-Heckoid-orbifold2}).
When $n>1$ is a non-integral half-integer,
$\Hecke(r;n)$ and $\orbs(r;n)$, respectively, are called
the ({\it odd}) {\it Heckoid orbifold} and
the ({\it odd}) {\it Heckoid group} of index $n$ for $K(r)$.
There is a natural epimorphism from $\pi_1(B^3-t(\infty))$
onto the odd Heckoid group $\Hecke(r;n)$,
and the image of the meridian pair of $\pi_1(B^3-t(\infty))$
is called the
{\it upper meridian pair} of $\Hecke(r;n)$.
Thus the odd Heckoid groups are also two-generator groups.
However, we show that
they are not one-relator groups (Proposition ~\ref{prop-not-one-relator}).

\begin{remark}
{
\rm
Our terminology is slightly different from that of \cite{Riley2},
where $\Hecke(r;n)$ is called the Heckoid group of index ``$2n$'' for $K(r)$.
The Heckoid orbifold $\orbs(r;n)$ and the Heckoid group $\Hecke(r;n)$
are {\it even} or {\it odd} according to whether Riley's index $2n$
is even or odd.
}
\end{remark}

We prove the following theorem,
which was anticipated in \cite{Riley2}
and is contained in \cite{Agol} without proof.

\begin{theorem}
\label{thm.Kleinian_heckoid}
For $r$ a rational number and $n>1$ an integer or a half-integer,
the Heckoid group $\Hecke(r;n)$ is isomorphic to a
geometrically finite Kleinian group
generated by two parabolic transformations.
\end{theorem}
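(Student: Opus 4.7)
The plan is to realize $\orbs(r;n)$ as a complete hyperbolic $3$-orbifold of finite volume whose only cusps are the torus boundary components inherited from the underlying $2$-bridge link complement $E(K(r))$. Once this is done, $\Hecke(r;n)=\pi_1(\orbs(r;n))$ becomes a lattice in $\Isom(\HH^3)$, hence a geometrically finite Kleinian group, and the upper meridian pair is represented by boundary-parallel loops around those cusps, hence consists of parabolic transformations. Since the upper meridian pair normally generates $\pi_1(B^3-t(\infty))$ and therefore $\Hecke(r;n)$, this yields the required generation by two parabolics.

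I would construct the hyperbolic structure by invoking Thurston's orbifold hyperbolization theorem (in the form proved by Boileau--Leeb--Porti, see \cite{Boileau-Porti}), applied to the explicit topological model of $\orbs(r;n)$ produced in Section~\ref{sec:topological-description}. The required input---that $\orbs(r;n)$ is compact, orientable, irreducible, topologically atoroidal and acylindrical, and contains no bad $2$-suborbifold---reduces to well-known properties of $E(K(r))$ together with a case-by-case analysis of how an essential suborbifold can meet the singular locus. In the even case, where the singular locus is the lower tunnel with cone index $n$, the analysis is direct: any candidate essential sphere, disk, annulus or torus suborbifold, whether topological or having isotropy along the tunnel, would descend to an essential surface of the same type in the hyperbolic manifold $E(K(r))$, which is impossible. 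For the odd case, the orbifold has the more intricate structure described in Proposition~\ref{prop:odd-Heckoid-orbifold}, and one carries out the analogous enumeration there.

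The main obstacle is the atoroidality/acylindricity check for the odd Heckoid orbifolds, where the singular locus is not a single properly embedded arc and where essential $2$-suborbifolds carrying nontrivial isotropy have no direct counterpart in $E(K(r))$; ruling these out requires using the explicit description from Section~\ref{sec:topological-description} rather than a black-box appeal to the hyperbolicity of $K(r)$. As a backup route that bypasses the orbifold theorem entirely, one can construct a Ford-type fundamental polyhedron for $\Hecke(r;n)$ in $\HH^3$ directly, following the framework of \cite{ASWY} for $2$-bridge link groups and deforming the face-pairings to realize the prescribed cone angle $2\pi/n$ around the axis representing the lower tunnel; Poincar\'e's polyhedron theorem would then yield discreteness, geometric finiteness, and the two-parabolic generation in one stroke, and would also make the identification with Agol's classification in \cite{Agol} transparent.
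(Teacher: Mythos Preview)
Your proposal has a genuine gap at the very first step: the Heckoid orbifold $\orbs(r;n)$ is \emph{not} a finite-volume hyperbolic orbifold, and $\Hecke(r;n)$ is not a lattice. In the even case you describe, the underlying space is $E(K(r))$ and the singular locus is the lower tunnel $\tau_-$, but $\tau_-$ is a \emph{properly embedded arc} whose endpoints lie on $\partial E(K(r))$. Hence the boundary $2$-orbifold of $\orbs(r;n)$ is not a union of tori: it is a torus (or pair of tori) carrying cone points of index $n$ where $\tau_-$ meets it, and therefore has strictly negative orbifold Euler characteristic. Equivalently, $\partial\orbo(r;m)\cong S^2(2,2,2,m)$ with $m\ge 3$ is hyperbolic, and $\partial\orbs(r;n)$ is a finite cover of it. A compact orientable $3$-orbifold with hyperbolic boundary cannot carry a complete finite-volume hyperbolic structure on its interior; any geometrization must produce an infinite-volume geometrically finite end over that boundary. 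So the sentence ``$\Hecke(r;n)$ becomes a lattice'' fails, and with it your mechanism for forcing the upper meridians to be parabolic (peripheral elements of a hyperbolic-type boundary are loxodromic, not parabolic).

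What the paper does instead---and what your argument is missing---is to pass to the \emph{pared} setting. One works on the quotient $\orbo(r;m)$ (which avoids the even/odd case split you flag), singles out the annular suborbifold $P\cong D^2(2,2)$ of $\partial\orbo(r;m)$ coming from the puncture of $\OO$, verifies that $(\orbo(r;m),P)$ is a pared orbifold (incompressibility of $P$, no essential annuli rel $P$, no $\ZZ^2$ subgroups), and then applies the hyperbolization theorem for Haken pared orbifolds. This produces a geometrically finite structure in which precisely the conjugacy class of $\partial P$ is parabolic; since the upper meridian pair of $\Hecke(r;n)$ lies in that class, the two generators are parabolic. A secondary problem with your reduction is the appeal to ``the hyperbolic manifold $E(K(r))$'': for $r=1/p$ the link $K(r)$ is a $(2,p)$ torus link and $E(K(r))$ is Seifert fibered, containing essential annuli, so you cannot rule out essential suborbifolds of $\orbs(r;n)$ by pushing them into $E(K(r))$. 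The paper's direct analysis on $\orbo(r;m)$ (Lemmas on bad suborbifolds, footballs, turnovers, and pillows) handles all $r$ uniformly.
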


In order to explain a systematic construction of epimorphisms
from $2$-bridge link groups onto Heckoid groups,
we prepare a few notation.
Let $\DD$ be the {\it Farey tessellation}, that is,
the tessellation of the upper half
space $\HH^2$ by ideal triangles which are obtained
from the ideal triangle with the ideal vertices $0, 1, \infty \in \QQQ$
by repeated reflection in the edges.
Then $\QQQ$ is identified with the set of the ideal vertices of $\DD$.
For each $r\in \QQQ$, let $\RGP{r}$ be the group of automorphisms of
$\DD$ generated by reflections in the edges of $\DD$ with an endpoint $r$.
Let $n>1$ be an integer or a half-integer,
and let $C_r(2n)$ be the group of automorphisms of $\DD$ generated
by the parabolic transformation, centered on the vertex $r$,
by $2n$ units in the clockwise direction.

For $r$ a rational number
and $n>1$ an integer or a half-integer,
let $\RGPC{r}{n}$ be the group generated by $\RGP{\infty}$ and $C_r(2n)$.
Then we have the following systematic construction
of epimorphisms from $2$-bridge link groups onto Heckoid groups.

\begin{theorem}
\label{thm:epimorophism}
Suppose that $r$ is a rational number and
that $n>1$ is an integer or a half-integer.
For $s\in\QQQ$, if $s$ or $s+1$ belongs to the $\RGPC{r}{n}$-orbit of $\infty$,
then there is an upper-meridian-pair-preserving epimorphism
from $G(K(s))$ to $\Hecke(r;n)$.
\end{theorem}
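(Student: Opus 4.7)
My plan is to reduce the theorem to a statement about normal closures in the free group $F := \pi_1(B^3 - t(\infty))$, and then to prove this statement by induction on word length in $\RGPC{r}{n}$. Since both $G(K(s)) \cong F/\llangle\alpha_s\rrangle$ and $\Hecke(r;n) \cong F/\llangle\alpha_r^n\rrangle$ are quotients of $F$ by the normal closure of a single element, an upper-meridian-pair-preserving epimorphism $G(K(s)) \to \Hecke(r;n)$ exists if and only if $\alpha_s$ lies in $\llangle\alpha_r^n\rrangle$. The ``$s+1$'' clause is handled at the outset by observing that the Dehn twist $T_{\alpha_\infty}$ descends to the identity on $F$ (since it modifies loops by inserting copies of $\alpha_\infty$, which is trivial in $F$). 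As $T_{\alpha_\infty}$ shifts slopes by $1$, this yields $\alpha_s = \alpha_{s+1}$ in $F$, so the two hypotheses produce the same required inclusion.

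The main claim to establish is that $\alpha_s \in \llangle\alpha_r^n\rrangle$ for every $s = g(\infty)$ with $g \in \RGPC{r}{n}$, and I would prove it by induction on the word length of $g$ in the generators of $\RGPC{r}{n}$, namely the reflections $R_m$ generating $\RGP{\infty}$ and the parabolic $C_r(2n)$. The base case $s = \infty$ is immediate, as $\alpha_\infty = 1$ in $F$.

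For the parabolic step $s = C_r(2n)(s')$, I would interpret $C_r(2n)$ as the $n$-th power $T^n$ of the Dehn twist $T = T_{\alpha_r}$ along $\alpha_r$ on the bridge sphere $\PConway$. The Dehn twist modifies any loop by inserting a conjugate of $\alpha_r^{\pm 1}$ at each geometric intersection with $\alpha_r$; after $n$ iterations the inserted factors become powers of $\alpha_r^{\pm n}$ and hence lie in $\llangle\alpha_r^n\rrangle$. Therefore $\alpha_{s} \equiv \alpha_{s'} \pmod{\llangle\alpha_r^n\rrangle}$, and the induction hypothesis completes this case.

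For the reflection step $s = R_m(s')$, the reflections $R_m$ and $R_0$ induce the same automorphism of $F$ because they differ by a power of $T_{\alpha_\infty}$, which is trivial on $F$; it therefore suffices to analyze the involution $R_0 \colon s \mapsto -s$, which is realized by a self-homeomorphism $\rho_0$ of $(B^3, t(\infty))$. The main obstacle I anticipate is the key identity that $\rho_{0,*}(\alpha_r)$ is conjugate in $F$ to $\alpha_r^{\pm 1}$. Granting this identity, $\rho_{0,*}$ preserves $\llangle\alpha_r^n\rrangle$ and hence descends to an automorphism of $\Hecke(r;n)$; the induction hypothesis $\alpha_{s'} \in \llangle\alpha_r^n\rrangle$ then yields $\alpha_s = \rho_{0,*}(\alpha_{s'}) \in \llangle\alpha_r^n\rrangle$, completing the induction. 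I expect this key identity to follow from an explicit analysis of the Magnus--Riley word representing $\alpha_r$ in the upper presentation, exploiting the symmetry of the upper tangle under which $\rho_0$ acts by swapping and/or inverting the two meridian generators.
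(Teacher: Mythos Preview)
Your reduction to an inclusion of normal closures in $F=\pi_1(B^3-t(\infty))$ works only when $n$ is an integer. For a half-integer $n$ the expression $\alpha_r^{\,n}$ has no meaning, and more importantly the odd Heckoid group is \emph{not} of the form $F/\llangle w\rrangle$ for any single $w$: the paper proves in Proposition~\ref{prop-not-one-relator} that odd Heckoid groups are never one-relator groups. The same obstruction shows up in your parabolic step: you interpret $C_r(2n)$ as $T_{\alpha_r}^{\,n}$, but when $n$ is a half-integer this is a half-power of a Dehn twist, which is not a homeomorphism of $\PConway$ (it would have to permute the punctures). So as written the argument simply does not address the odd Heckoid case, which is half of the theorem.

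The paper avoids exactly this difficulty by passing to the $(\ZZ/2\ZZ)^2$-quotient $\OO$ and working with $\beta_r$ rather than $\alpha_r$. There $\alpha_r=\beta_r^{\,2}$, the relation becomes $\beta_r^{\,m}=1$ with $m=2n$ always an integer, and the relevant parabolic is realized by the $m$-th power of the Dehn twist along $\beta_r$ on the orbifold $\orbb(r;m)$ (Lemma~\ref{lem:orbimap1}(1)). The reflection $s\mapsto -s$ is likewise handled on $\orbb(\infty;2)$ (Lemma~\ref{lem:orbimap1}(2)), where one computes directly that it is an inner automorphism because $\beta_\infty^{\,2}=1$; your ``key identity'' then drops out without any analysis of Magnus--Riley words. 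Thus for integer $n$ your outline and the paper's are close in spirit (induction over the generators of $\RGPC{r}{n}$, Dehn twists for the parabolics, a reflection for $s\mapsto -s$), but the paper's passage to $\pi_1(\OO)$ is what makes the argument uniform in $n$.

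Two smaller points. First, the full Dehn twist $T_{\alpha_\infty}$ on the four-punctured sphere shifts slopes by $2$, not by $1$; the map realizing $s\mapsto s+1$ is a half-twist that permutes the punctures and acts on $F$ by an automorphism such as $b\mapsto b^{-1}$, not by the identity. So your treatment of the ``$s+1$'' clause needs adjustment (the paper simply invokes the upper-meridian-pair-preserving isomorphism $G(K(s))\cong G(K(s+1))$). Second, your key identity that $\rho_{0,*}$ preserves $\llangle\alpha_r^{\,n}\rrangle$ is true for integer $n$, but the clean reason is the orbifold computation above, not a palindrome property of the Riley word; the latter route would require you to verify that $\alpha_{-r}$ is $F$-conjugate to $\alpha_r^{\pm1}$, which is not obvious since the inner automorphism produced by Lemma~\ref{lem:orbimap1}(2) is conjugation by $\beta_\infty\notin F$.
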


This theorem may be regarded as a generalization of
Theorem ~B and Theorem ~3 of Riley ~\cite{Riley2}.
In fact, they correspond to the case
when $s$ belongs to the orbit of $\infty$
by the infinite cyclic subgroup $C_r(2n)$ of $\RGPC{r}{n}$
(see Remark ~\ref{rem:Riley's_theorem}).

The above theorem is actually obtained from the following theorem.

\begin{theorem}
\label{thm:fundametal_domain}
Suppose that $r$ is a rational number and
that $n>1$ is an integer or a half-integer.
Let $s$ and $s'$ be elements of $\QQQ$
which belong to the same $\RGPC{r}{n}$-orbit.
Then the conjugacy classes
$\alpha_s$ and $\alpha_{s'}$ in $\Hecke(r;n)$ are equal.
In particular, if $s$ belongs to the $\RGPC{r}{n}$-orbit of $\infty$,
then $\alpha_s$ is the trivial conjugacy class
in $\Hecke(r;n)$.
\end{theorem}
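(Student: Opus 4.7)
The group $\RGPC{r}{n}$ is generated by the reflections in the edges of $\DD$ with endpoint $\infty$ (which generate $\RGP{\infty}$) together with a generator $\gamma$ of the cyclic group $C_r(2n)$. Hence, to prove the first assertion it suffices to show, for every $s\in\QQQ$, that (A) $\alpha_{\rho(s)}=\alpha_s$ in $\Hecke(r;n)$ for every reflection $\rho$ in an edge of $\DD$ with endpoint $\infty$, and (B) $\alpha_{\gamma(s)}=\alpha_s$ in $\Hecke(r;n)$. The reduction is monotone under taking further quotients, so it will be enough to prove (A) already in $\pi_1(\PConway)/\llangle\alpha_\infty\rrangle$ and (B) in $\pi_1(\PConway)/\llangle\alpha_\infty,\alpha_r^n\rrangle$.

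For (A), I would adapt the argument of Ohtsuki--Riley--Sakuma \cite{Ohtsuki-Riley-Sakuma} for the $\RGP{\infty}$-part of their orbit theorem on $2$-bridge link groups. That argument produces a $\pi_1$-level identity exhibiting $\alpha_{\rho(s)}$ as a conjugate of $\alpha_s$ in $\pi_1(\PConway)/\llangle\alpha_\infty\rrangle$, using only the relation $\alpha_\infty=1$. Since $\Hecke(r;n)$ is a further quotient, the same identity establishes (A) verbatim; the quotient relation $\alpha_r^n=1$ plays no role in this step.

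For (B), I would identify $\gamma$ with the $n$-th power of the Dehn twist $\tau_r$ along $\alpha_r$: under the standard correspondence between the Farey tessellation and mapping classes of $\PConway$ (up to the hyperelliptic involution), a ``one Farey unit'' parabolic at $r$ is a half Dehn twist about $\alpha_r$, so the ``$2n$-unit'' generator $\gamma$ of $C_r(2n)$ corresponds to $\tau_r^{\,n}$. The automorphism $\tau_{r,*}$ of $\pi_1(\PConway)$ sends $\alpha_s$ to a word obtained from $\alpha_s$ by conjugation and by multiplication by a power of $\alpha_r$ whose exponent is controlled by the geometric intersection number $i(\alpha_r,\alpha_s)$; iterating $n$ times, the total $\alpha_r$-exponent lies in $n\ZZ$, and hence vanishes in $\Hecke(r;n)=\pi_1(\PConway)/\llangle\alpha_\infty,\alpha_r^n\rrangle$. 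For half-integer $n$, where $\gamma$ corresponds to a half Dehn twist composed with a puncture swap rather than a pure Dehn twist, the same strategy applies, using the explicit presentation in Definition~\ref{def:odd_Heckoid_group} and the topological description of $\orbs(r;n)$ furnished by Proposition~\ref{prop:odd-Heckoid-orbifold} to match $\gamma$ with a concrete mapping class on the underlying $4$-punctured sphere.

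The ``In particular'' statement is immediate: if $s$ lies in the $\RGPC{r}{n}$-orbit of $\infty$, then by the first part $\alpha_s=\alpha_\infty$ in $\Hecke(r;n)$, and the latter is trivial by construction. The main obstacle is carrying out (B) precisely, namely writing $\tau_{r,*}(\alpha_s)$ at the level of $\pi_1(\PConway)$ in a form $w\,\alpha_s\,\alpha_r^{\,\epsilon}\,w^{-1}$ (or a close variant), and then verifying that after $n$ iterations the total $\alpha_r$-exponent is divisible by $n$. The half-integer case is the subtlest: one must exhibit $\gamma$ as an explicit self-homeomorphism of the singular locus of $\orbs(r;n)$ whose induced $\pi_1$-action respects the abstract Heckoid relation of Definition~\ref{def:odd_Heckoid_group}.
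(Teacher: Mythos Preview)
Your plan for (A) is sound and matches the spirit of \cite{Ohtsuki-Riley-Sakuma}; the paper does the analogous thing via self-homeomorphisms of $\orbb(\infty;2)$ (Lemma~\ref{lem:orbimap1}(2) together with the case $r=\infty$, $m=2$ of Lemma~\ref{lem:orbimap1}(1)).

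The genuine gap is in (B). Your proposed shape $\tau_{r,*}(\alpha_s)=w\,\alpha_s\,\alpha_r^{\epsilon}\,w^{-1}$ is not correct: a Dehn twist along $\alpha_r$ inserts a conjugate of $\alpha_r^{\pm 1}$ at \emph{each} of the $i(\alpha_r,\alpha_s)$ intersection points of $\alpha_s$ with $\alpha_r$, not a single factor at one end, so there is no ``total $\alpha_r$-exponent'' to collect in a nonabelian word. For integer $n$ the idea is nonetheless salvageable, and in fact simpler than you make it: the $n$-th power $\tau_r^{\,n}$ inserts conjugates of $\alpha_r^{\pm n}$ at each crossing, so in $\pi_1(\PConway)/\llangle\alpha_\infty,\alpha_r^n\rrangle$ one gets $\tau_r^{\,n}(\alpha_s)=\alpha_s$ immediately---no iteration bookkeeping is needed.

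The half-integer case is where your approach really breaks down. When $m=2n$ is odd, $\Hecke(r;n)$ is \emph{not} of the form $\pi_1(\PConway)/\llangle\alpha_\infty,\alpha_r^{?}\rrangle$; it is defined only as the image of $\pi_1(\PConway)$ inside $\pi_1(\OO)/\llangle\beta_\infty^2,\beta_r^m\rrangle$ (Definition~\ref{def:odd_Heckoid_group}), and the kernel of $\psi$ has no simple description in terms of powers of $\alpha_r$. Your half-twist-plus-puncture-swap is indeed a homeomorphism of $\PConway$, but you have given no mechanism for checking that the induced automorphism of $\pi_1(\PConway)$ preserves $\ker\psi$, and Proposition~\ref{prop:odd-Heckoid-orbifold} (a description of the covering orbifold, not a presentation of its fundamental group) does not supply one.

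The paper sidesteps both difficulties by working one level down, with the loops $\beta_s$ in $\pi_1(\OO)$ and the $3$-orbifold $\orbb(r;m)$. The key observation (Lemma~\ref{lem:orbimap1}(1)) is that the $m$-th power of the Dehn twist along a discal $2$-suborbifold bounded by $\beta_r$ is a genuine orbifold self-homeomorphism of $\orbb(r;m)$ inducing the identity on $\pi_1(\orbb(r;m))=\pi_1(\OO)/\llangle\beta_r^m\rrangle$, and this holds uniformly for every integer $m\ge 2$. From Corollary~\ref{cor:orbimap2} one then gets $\beta_s=\beta_{s'}$ as conjugacy classes in $\pi_1(\orbo(r;m))$ whenever $s,s'$ lie in the same $\RGPC{r}{n}$-orbit, and finally one passes to $\alpha_s=\beta_s^2$ in the finite-index subgroup $\Hecke(r;n)\subset\pi_1(\orbo(r;m))$, using that the natural $(\ZZ/2\ZZ)^2$-action on conjugacy classes fixes each $\alpha_s$. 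Passing through $\OO$ is exactly what makes the odd case no harder than the even one.
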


\section{Combinatorial definition of Heckoid groups}
\label{fricke_surfaces}

In this section, we give an explicit combinatorial definition
of even/odd Heckoid groups.
Consider the $(2,2,2,\infty)$-orbifold,
$\OO:=(\RR^2-\ZZ^2)/\hat H$,
where $\hat H$ is the group generated by $\pi$-rotations
around the points in $(\frac{1}{2}\ZZ)^2$.
Note that $\OO$ has a once-punctured sphere as the underlying space,
and has three cone points of cone angle $\pi$.
The orbifold fundamental group of $\OO$ has the presentation
\[
\pi_1(\OO)=\langle P,Q,R \svert P^2=Q^2=R^2=1 \rangle,
\]
where $D:=(PQR)^{-1}$ is represented by the puncture of $\OO$
(see Figure ~\ref{fig.(2,2,2,infty)-orbifold}).
For each $s\in\QQQ$, the image of a straight line of slope $s$
in $\RR^2-\ZZ^2$ disjoint from the singular set of $\hat H$
projects to a simple loop, $\beta_s$, in $\OO$ disjoint from
the cone points.
Thus the loop $\beta_s$ (with an orientation) represents a conjugacy class in $\pi_1(\OO)$.
We abuse notation to denote by $\beta_s$ the pair of
the conjugacy classes in $\pi_1(\OO)$
represented by $\beta_s$ with two possible orientations.
Throughout this paper,
we choose the generating set $\{P,Q,R\}$ of $\pi_1(\OO)$
so that the conjugacy classes
$\beta_0$ and $\beta_{\infty}$ are represented by $RQ$ and $PQ$, respectively
(see Figure ~\ref{fig.(2,2,2,infty)-orbifold} and \cite[Section ~2.1]{ASWY}).

\begin{figure}[h]
\begin{center}
\includegraphics{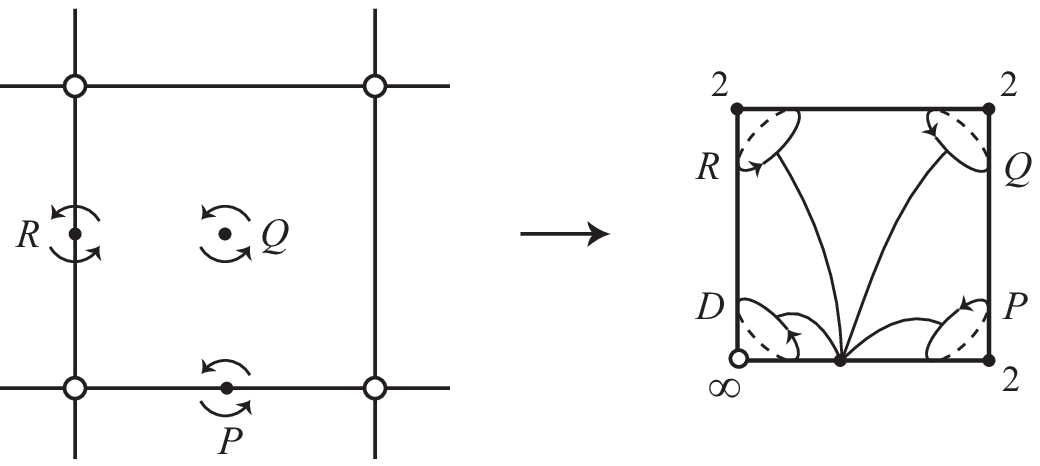}
\end{center}
\caption{
\label{fig.(2,2,2,infty)-orbifold}
The orbifold $\OO$}
\end{figure}

The Conway sphere $\PConway=(\RR^2-\ZZ^2)/H$
is the $(\ZZ/2\ZZ)^2$-covering of $\OO$,
and hence $\pi_1(\PConway)$ is a normal subgroup of $\pi_1(\OO)$
such that $\pi_1(\OO)/\pi_1(\PConway)\cong (\ZZ/2\ZZ)^2$.
Each simple loop $\alpha_s$ in $\PConway$
doubly covers the simple loop $\beta_s$, and so
we have $\alpha_s=\beta_s^2$ as conjugacy classes in $\pi_1(\OO)$.

For each $r\in\QQQ$ and integer $m\ge 2$,
consider the orbifold, $\orbb(r;m)$, as illustrated in Figure ~\ref{fig.ball-orbifold}.
In order to give its explicit description,
we prepare notation following \cite{Mecchia-Zimmermann}.
For an integer $m \ge 2$, let $D^2(m)$ be the {\it discal $2$-orbifold}
obtained from the unit disk $D^2$ in the complex plane
by taking the quotient of the action generated by
the $2\pi/m$-rotation $z\mapsto e^{2\pi i/m}z$.
We call the product $3$-orbifold $D^2(m)\times I$ with $I=[0,1]$
a {\it $2$-handle orbifold}.
The quotient orbifold of the unit $3$-ball $B^3$ in $\RR^3$
by the dihedral subgroup, $D_{2m}$, of $SO(3)$ of order
$2m$ is denoted by $B^3(2,2,m)$ and is called a {\it $3$-handle orbifold}.
By using this notation,
the orbifold $\orbb(r;m)$ has the following description (see Figure ~\ref{fig.ball-orbifold}).
Let $\check\OO$ be the compact $2$-orbifold obtained from $\OO$
by removing an open regular neighborhood of the puncture.
Then $\orbb(r;m)$ is obtained from
the product orbifold $\check \OO\times[0,1]$ by attaching $2$- and $3$-handle
orbifolds as follows.
\begin{enumerate}[\indent \rm (1)]
\item
Attach a $2$-handle orbifold $D^2(m)\times I$
along the simple loop $\beta_r\times \{0\}$,
i.e.,
identify $(\partial D^2(m))\times I$
with an annular neighborhood of
$\beta_r\times \{0\}$ in the boundary of $\check\OO\times I$.

\item
Cap off the spherical orbifold boundary of the resulting orbifold
by a $3$-handle orbifold $B^3(2,2,m)$.
\end{enumerate}
Note that the $2$-dimensional orbifold $\check\OO$ sits in
the boundary of $\orbb(r;m)$; we call it the
{\it outer boundary} of $\orbb(r;m)$,
and denote it by $\partial_{out}\orbb(r;m)$.
To be precise, as in the definition of rational tangles,
$\orbb(r;m)$ is defined to be the orbifold as in Figure ~\ref{fig.ball-orbifold}
which is endowed with a homeomorphism
from $\partial_{out}\orbb(r;m)$ to $\check\OO$.
Thus, by van Kampen's theorem
for orbifold fundamental groups \cite[Corollary ~2.3]{Boileau-Maillot-Porti},
we can identify the orbifold fundamental group
$\pi_1(\orbb(r;m))$ with
$\pi_1(\check\OO)/\llangle \beta_r^m\rrangle=\pi_1(\OO)/\llangle \beta_r^m\rrangle$.
(Here we use the fact that the inclusion map
$\partial B^3(2,2,m)\to B^3(2,2,m)$
induces an isomorphism between
the orbifold fundamental groups.)

\begin{figure}[h]
\begin{center}
\includegraphics{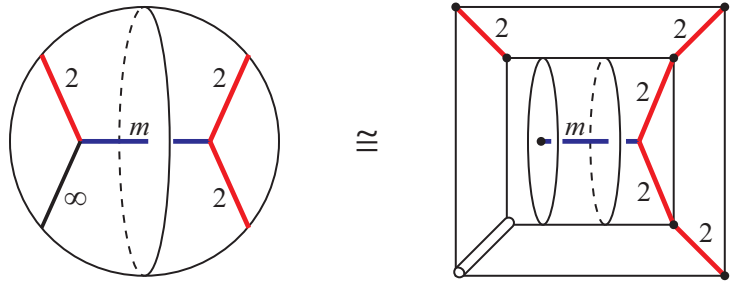}
\end{center}
\caption{
\label{fig.ball-orbifold}
The orbifold $\orbb(r;m)=(\check\OO\times I)\cup (D^2(m)\times I)\cup B^3(2,2,m)$ with $r=\infty$,
where we employ Convention ~\ref{conv:orbifold}.
}
\end{figure}

For a rational number $r$ and an integer $m\ge 2$,
let $\orbo(r;m)$ be the orbifold obtained by identifying
$\orbb(\infty;2)$ and $\orbb(r;m)$, along their outer boundaries,
via their identification with $\check\OO$.
By van Kampen's theorem, the orbifold fundamental group of $\orbo(r;m)$ is given by
the following formula:
\[
\pi_1(\orbo(r;m)) \cong
\pi_1(\OO)/\llangle \beta_{\infty}^2, \beta_r^m\rrangle.
\]

\begin{proposition}
\label{prop:even_Heckoid_group}
For a rational number $r$ and an integer $n>1$,
the even Heckoid orbifold $\orbs(r;n)$
is a $(\ZZ/2\ZZ)^2$-covering of $\orbo(r;m)$, where $m=2n$.
In particular, the even Heckoid group $\Hecke(r;n)$
is identified with the image of
the homomorphism, $\psi$, which is the following composition
of two natural homomorphisms
\[
\pi_1(\PConway) \to \pi_1(\OO) \to \pi_1(\orbo(r;m)).
\]
\end{proposition}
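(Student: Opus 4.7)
The plan is to construct an explicit $(\ZZ/2\ZZ)^2$-orbifold covering $p\colon\orbs(r;n)\to\orbo(r;m)$ with $m=2n$; from this, the identification of $\Hecke(r;n)$ with the image of $\psi$ will follow by covering-space theory. I would first produce the $(\ZZ/2\ZZ)^2$-action on $\orbs(r;n)$ by extending the deck-group action $\hat H/H\cong(\ZZ/2\ZZ)^2$ of the Conway-sphere cover $\PConway\to\OO$ across the bridge sphere. The bridge decomposition $(S^3,K(r))=(B^3,t(\infty))\cup_{\Conway}(B^3,t(r))$ exhibits $\orbs(r;n)$ as the union of two trivial-tangle exteriors glued along $\PConway$, with the singular lower tunnel $\tau$ sitting in the lower tangle. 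Each trivial tangle $(B^3,t)$ admits a classical $(\ZZ/2\ZZ)^2$-symmetry by three mutually perpendicular $\pi$-rotations, whose boundary action on $\partial B^3=\Conway$ realizes the deck-group action on the Conway sphere; on the lower side I would isotope so that one of these three axes coincides with $\tau$. Since the corresponding $\ZZ/2\ZZ$-subgroup then fixes $\tau$ pointwise, the action extends compatibly to the orbifold structure in which $\tau$ carries index~$n$.

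Next, I would identify the quotient orbifold with $\orbo(r;m)$ piece by piece along the bridge sphere. On $\PConway$, the quotient is $\check\OO$ by construction. The upper trivial tangle quotients to $\orbb(\infty;2)=(\check\OO\times I)\cup(D^2(2)\times I)\cup B^3(2,2,2)$: the underlying space is $B^3/(\ZZ/2\ZZ)^2\cong B^3(2,2,2)$, and the image of $t(\infty)$ together with a collar of $\check\OO$ assembles into the advertised handle decomposition. For the lower side, the same computation produces the underlying $B^3(2,2,2)$, but the image of $\tau$ now carries enhanced local isotropy: at a generic point of $\tau$, the cyclic group $C_n$ from the orbifold structure and the $\ZZ/2\ZZ$-deck stabilizer of $\tau$, both acting as rotations about the $\tau$-axis when lifted to the universal cover $B^3$, together generate the cyclic group $C_m$ of order $m=2n$. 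The lower-side quotient is therefore $\orbb(r;m)=(\check\OO\times I)\cup(D^2(m)\times I)\cup B^3(2,2,m)$, completing the identification.

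Having constructed the covering $p$, the second assertion follows. The induced injection $p_*\colon\pi_1(\orbs(r;n))\hookrightarrow\pi_1(\orbo(r;m))$ has image of index $4$, and $p$ restricts to the cover $\PConway\to\OO$ on the bridge sphere. The composite $\psi$ therefore factors through $p_*$, and its image equals $p_*\pi_1(\orbs(r;n))$ because the natural map $\pi_1(\PConway)\to\pi_1(\orbs(r;n))$ is surjective: its image contains the upper meridian pair, which generates $\pi_1(\orbs(r;n))$ by van Kampen. The image of $\psi$ is thus identified with $\Hecke(r;n)$, as claimed.

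The main obstacle is the index-$m$ computation in the second paragraph. Naively, one might compute the combined local group at $\tau$ inside the intermediate quotient, which for $n$ even would incorrectly produce only a cyclic group of order $n$ rather than $m=2n$. The correct computation lifts both the orbifold isotropy $C_n$ and the $\ZZ/2\ZZ$-deck stabilizer to the universal cover $B^3$ before combining, where they correspond respectively to the normal subgroup $C_n$ and a coset representative $\rho$ generating $C_m/C_n\cong\ZZ/2\ZZ$ inside the dihedral group $D_{2m}=\pi_1(B^3(2,2,m))$, and so together generate the full cyclic group $C_m$ of order $m=2n$.
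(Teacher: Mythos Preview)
Your approach is correct and leads to the same conclusion, but it is organized differently from the paper's proof. The paper does \emph{not} use the trivial-tangle decomposition of $\orbs(r;n)$; instead it writes $\orbs(r;n)$ as $\check\PConway\times[-1,1]$ with an ordinary $2$-handle attached along $\alpha_\infty\times\{1\}$ and a $2$-handle orbifold $D^2(n)\times I$ attached along $\alpha_r\times\{-1\}$. Since the deck action $\hat H/H$ on $\check\PConway$ can be arranged to leave $\alpha_\infty$ and $\alpha_r$ invariant, the action extends over the handles, and the two halves descend directly to the handle-orbifold pieces $\orbb(\infty;2)$ and $\orbb(r;m)$ in the very definition of $\orbo(r;m)$. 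In this picture the doubling $m=2n$ comes from the fact that $\alpha_r$ \emph{double covers} $\beta_r$: the half-rotation of the attaching circle extends to the half-rotation $w\mapsto -w$ of $|D^2(n)|$, and the resulting quotient of $D^2(n)$ is $D^2(2n)$. Your route instead goes through the tangle-symmetry picture, which is essentially what the paper does later (Proposition~5.1) to give the concrete description of $\orbo(r;m)$; in effect you are merging Propositions~3.1 and~5.1 into a single argument. That is a legitimate alternative, at the cost of having to identify your quotient three-balls with the handle-orbifolds $\orbb(\cdot;\cdot)$ from scratch.

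One comment on your last paragraph: your diagnosis of the index-$m$ subtlety is right, but the resolution as written is circular, since you invoke $D_{2m}=\pi_1(B^3(2,2,m))$ before you have established that the quotient \emph{is} $B^3(2,2,m)$. The clean local argument is the one you are implicitly using anyway: in the orbifold chart $D^2\to D^2/C_n\cong|D^2(n)|$, $z\mapsto z^n$, the deck involution acts on $|D^2(n)|$ by $w\mapsto -w$, so any lift $\tilde\sigma$ satisfies $\tilde\sigma(z)^n=-z^n$, forcing $\tilde\sigma(z)=e^{i\pi(2k+1)/n}z$. Thus $\langle C_n,\tilde\sigma\rangle=C_{2n}$ for every $n$, and no appeal to the eventual dihedral group is needed.
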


\begin{proof}
{\rm
Let $\check \PConway$ be the compact $2$-manifold obtained from
$\PConway$ by removing open regular neighborhoods of the punctures.
Then we see that
the even Heckoid orbifold $\orbs(r;n)$ is obtained from
$\check\PConway\times [-1,1]$
by attaching a $2$-handle $D^2\times I$ along $\alpha_{\infty}\times \{1\}$
and by attaching a $2$-handle orbifold $D^2(n)\times I$
along $\alpha_{r}\times \{-1\}$.
Note that the group $\hat H/H\cong (\ZZ/2\ZZ)^2$
acts on $\check\PConway$ and the quotient is identified with $\check\OO$.
Since the loops $\alpha_{\infty}$ and $\alpha_{r}$
on $\check\PConway$ can be chosen so that
they are invariant by the action,
it extends to an action on $\orbs(r;n)$.
Moreover the quotient of
$\check\PConway\times [0,1]\cup D^2\times I$
and that of $\check\PConway\times [-1,0]\cup D^2(n)\times I$
are identified with $\orbb(\infty;2)$ and $\orbb(r;m)$, respectively.
Hence $\orbs(r;n)$
is a $(\ZZ/2\ZZ)^2$-covering of $\orbo(r;m)$.
Since the covering $\orbs(r;n)\to \orbo(r;m)$
is ``induced'' by the covering $\PConway\to\orbo$,
and since the natural homomorphism
$\pi_1(\PConway)\to \pi_1(\orbs(r;n))$ is surjective,
we see that $\Hecke(r;n)$
is identified with $\mathrm{Im}(\psi)$.
}
\end{proof}

This motivates us to introduce the following definition.

\begin{definition}
\label{def:odd_Heckoid_group}
{\rm
For a rational number $r$ and
a non-integral half-integer $n$ greater than $1$,
the ({\it odd}) {\it Heckoid group $\Hecke(r;n)$ of index $n$ for $K(r)$}
is defined to be the image, $\mathrm{Im}(\psi)$, of the natural map
\[
\psi: \pi_1(\PConway) \to \pi_1(\OO) \to \pi_1(\orbo(r;m)),
\]
where $m=2n$.
The covering orbifold of $\orbo(r;m)$ corresponding to the subgroup
$\Hecke(r;n)$ of $\pi_1(\orbo(r;m))$
is denoted by $\orbs(r;n)$ and is called the ({\it odd}) {\it Heckoid orbifold
for the $2$-bridge link $K(r)$ of index $n$}.
(See Section ~\ref{sec:topological-description} for a topological description of this orbifold.)
}
\end{definition}

Note that $\psi$ is equal to the composition
\[
\pi_1(\PConway)
\to
\pi_1(B^3-t(\infty))=
\pi_1(\PConway)/ \llangle\alpha_{\infty}\rrangle
\to
\pi_1(\OO)/\llangle \beta_{\infty}^2\rrangle
\to
\pi_1(\orbo(r;m)).
\]
Since $\pi_1(\PConway)\to\pi_1(B^3-t(\infty))$ is surjective and since
$\pi_1(B^3-t(\infty))$ is a free group of rank $2$,
the Heckoid group $\Hecke(r;n)$ is generated by two elements.
However, no odd Heckoid group is a one-relator group
(see Proposition ~\ref{prop-not-one-relator}).

\section{Proofs of Theorems ~\ref{thm:epimorophism} and ~\ref{thm:fundametal_domain}}
\label{sec:proof_if_part}

The following lemma, on the existence of certain self-homeomorphisms of the orbifold $\orbb(r;m)$,
is the heart of Theorem ~\ref{thm:fundametal_domain}.
For the definition of a homeomorphism (diffeomorphism) between orbifolds,
see \cite[Section ~2.1.3]{Boileau-Maillot-Porti} or \cite[p.138]{Kapovich}.

\begin{lemma}
\label{lem:orbimap1}
{\rm (1)}
For $r \in \QQQ$ and an integer $m \ge 2$,
let $F$ be a discal $2$-suborbifold properly embedded in $\orbb(r;m)$ bounded by $\beta_r$,
and let $\varphi$ be the $m$-th power of the Dehn twist
of the underlying space $|\orbb(r;m)|$, preserving the singular set,
along the disk $|F|$.
Then $\varphi$ is a self-homeomorphism of the orbifold $\orbb(r;m)$,
which induces the identity {\rm (}outer{\rm )} automorphism of $\pi_1(\orbb(r;m))$.

{\rm (2)}
For an integer $m \ge 2$,
let $\gamma$ be the reflection of $|\orbb(\infty;m)|$ of Figure ~\ref{fig.ball-orbifold}
in the sheet of the figure.
Then $\gamma$ is a self-homeomorphism of the orbifold $\orbb(\infty;m)$.
Moreover, if $m=2$, then
$\gamma$ induces the identity {\rm (}outer{\rm )} automorphism of $\pi_1(\orbb(\infty;m))$.
\end{lemma}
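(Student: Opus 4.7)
The plan is to treat the two parts separately. Part~(1) splits into (a) a local check that the $m$-th power Dehn twist respects the orbifold structure along the order-$m$ singular axis of $\orbb(r;m)$, and (b) a $\pi_1$-level computation exploiting the relation $\beta_r^m=1$. Part~(2) relies on the reflective symmetry visible in Figure~\ref{fig.ball-orbifold}, together with a direct computation of $\gamma_*$ on generators in the case $m=2$.

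For Part~(1)(a), I would work in a local uniformizing chart at the unique cone point of $F$, i.e.\ the point where $F$ meets the singular axis of order $m$ transversely. The chart is modelled on $D^2\times(-\varepsilon,\varepsilon)$ with $\ZZ/m\ZZ$ acting by rotation on the $D^2$-factor; introducing the quotient coordinate $w=z^m$ on $|D^2|$, the underlying disk $|F|$ corresponds to $\{t=0\}$ and a single Dehn twist on the underlying space has the form $(w,t)\mapsto(e^{2\pi i\rho(t)}w,t)$ for a bump function $\rho$ rising monotonically from $0$ to $1$. Its $m$-th power $(w,t)\mapsto(e^{2\pi i m\rho(t)}w,t)$ lifts to the $\ZZ/m\ZZ$-equivariant diffeomorphism $(z,t)\mapsto(e^{2\pi i\rho(t)}z,t)$ of the chart, proving that $\varphi$ is an orbifold diffeomorphism; a single Dehn twist fails to lift (an $m$-th root ambiguity obstructs it), which explains why one must pass to the $m$-th power.

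For Part~(1)(b), I choose a basepoint $x_0\in\partial_{out}\orbb(r;m)$ lying outside a tubular neighbourhood of $F$ supporting $\varphi$, so that $\varphi(x_0)=x_0$. Any element of $\pi_1(\orbb(r;m),x_0)$ is represented by a loop transverse to $F$, and the standard effect of the $m$-th power Dehn twist is to insert a conjugate of $\beta_r^{\pm m}$ at each transverse intersection with $F$. Because $F$ is a discal $m$-suborbifold bounded by $\beta_r$, the relation $\beta_r^m=1$ holds in $\pi_1(\orbb(r;m))$, so every inserted factor vanishes; hence $\varphi_*$ is the identity automorphism, a fortiori the identity outer automorphism.

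For Part~(2), Figure~\ref{fig.ball-orbifold} shows that both the singular set of $\orbb(\infty;m)$ and the disk $F$ lie in the plane of the page, so the reflection $\gamma$ in that plane preserves the underlying 3-ball, fixes the singular set pointwise, and preserves all singular indices; this makes $\gamma$ an orbifold self-homeomorphism for every $m\ge2$. For $m=2$, the restriction of $\gamma$ to the outer boundary $\check\OO$ is a reflection fixing each of its three cone points; since $\gamma$ reverses orientation, it sends each meridional generator $P,Q,R$ of $\pi_1(\OO)$ to its inverse. In $\pi_1(\orbb(\infty;2))$ each of $P,Q,R$ has order $2$, so $P^{-1}=P$, $Q^{-1}=Q$, $R^{-1}=R$; since the natural map $\pi_1(\check\OO)\to\pi_1(\orbb(\infty;2))$ is surjective, $\gamma_*$ is in fact the identity on $\pi_1(\orbb(\infty;2))$, hence certainly the identity outer automorphism. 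The main obstacle is keeping the $\pi_1$-computation in Part~(1)(b) honest about basepoints: one must verify that the geometric ``insertion'' of $\beta_r^m$ at each transverse intersection corresponds, via conjugating paths back to $x_0$, to multiplication by a conjugate of $\beta_r^m$ inside $\pi_1(\orbb(r;m),x_0)$, with no hidden global contribution. The sharpness of the condition $m=2$ in Part~(2) is explained by the same principle: for $m\ge3$ the element $\beta_\infty=PQ$ has order $m>2$ in $\pi_1(\orbb(\infty;m))$, so $\beta_\infty^{-1}\ne\beta_\infty$ and the reflection $\gamma$ would invert $\beta_\infty$ nontrivially.
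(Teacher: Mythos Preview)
Your treatment of Part~(1) is correct and matches the paper's approach: the local lift of the $m$-th power Dehn twist is exactly the computation the paper gives, and your ``insertion of conjugates of $\beta_r^{\pm m}$'' argument for Part~(1)(b) is a clean conceptual version of what the paper does by an explicit calculation on the generators $P,Q,R$ after reducing to $r=\infty$.

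Part~(2), however, contains a genuine error in the $\pi_1$ computation. Your claim that $\gamma_*$ sends each of $P,Q,R$ to its inverse cannot be right, because $P,Q,R$ \emph{already} have order~$2$ in $\pi_1(\OO)=\langle P,Q,R\mid P^2=Q^2=R^2=1\rangle$, not merely in the quotient $\pi_1(\orbb(\infty;2))$. Thus your argument would show that $\gamma_*$ is the identity on $\pi_1(\OO)$ itself, with no use of the relation $\beta_\infty^2=1$. But this is false: $\gamma$ carries the simple loop $\beta_s$ to $\beta_{-s}$, and for generic $s$ these are not conjugate in $\pi_1(\OO)$, so $\gamma_*$ is not even inner there. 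The flaw is the heuristic ``orientation-reversing reflection fixing the cone points inverts the meridional generators'': while the small loop around a fixed cone point is indeed reversed, the connecting path from the basepoint is generally moved, so the based generator is sent to a \emph{conjugate} of its inverse (equivalently, of itself), and the conjugating elements differ from generator to generator.

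The paper's computation, read off from Figure~\ref{fig.(2,2,2,infty)-orbifold}, is
\[
\gamma_*(P)=P,\qquad \gamma_*(Q)=\beta_\infty Q\beta_\infty^{-1},\qquad \gamma_*(R)=\beta_\infty R\beta_\infty^{-1},
\]
with $\beta_\infty=PQ$. Composing with conjugation by $\beta_\infty^{-1}$ leaves $Q,R$ fixed and sends $P$ to $QPQ$; only now does the relation $(PQ)^2=1$ specific to $m=2$ intervene to give $QPQ=P$, yielding the identity outer automorphism. This also explains correctly why $m=2$ is needed. Your verification that $\gamma$ is an orbifold self-homeomorphism is essentially right, though the paper is slightly more careful at the trivalent vertex, exhibiting an explicit lift to $B^3$ equivariant for the dihedral group $D_{2m}$.
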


\begin{proof}
{\rm
(1) To show the first assertion, we have only to check that
each singular point $x$ of $\orbb(r;m)$ has a neighborhood, $U_x$,
such that the restriction of $\varphi$ to $U_x$
lifts to an equivariant homeomorphism from
a manifold covering of $U_x$ to that of $\varphi(U_x)$.
But, this follows from the following observation.
Let $p:D^2\times [0,1] \to D^2(m)\times [0,1]$
be the universal covering of the $2$-handle orbifold,
given by $p(z,t)=(z^m,t)$,
where we identify both $D^2$ and $|D^2(m)|$
with the unit disk in the complex plane.
Let $\varphi$ be the $m$-th power of the Dehn twist of $|D^2(m)\times [0,1]|$
given by $\varphi(z,t)=(e^{2\pi mti}z,t)$.
Then it is covered by the Dehn twist, $\tilde\varphi$, of $D^2\times [0,1]$,
defined by $\tilde\varphi(z,t)=(e^{2\pi ti}z,t)$,
namely we have $\varphi\circ p = p\circ \tilde\varphi$.
(The corresponding automorphism of the local group, $\ZZ/m\ZZ$, is the identity map.)
Thus we have shown the first assertion
that $\varphi$ is a self-homeomorphism of the orbifold $\orbb(r;m)$.

To show the second assertion, we may assume $r=\infty$ without loss of generality.
Then we can see by using Figure ~\ref{fig.(2,2,2,infty)-orbifold} that
\[
(\varphi_*(P), \varphi_*(Q), \varphi_*(R))=
(\beta_{\infty}^m P \beta_{\infty}^{-m}, \beta_{\infty}^m Q \beta_{\infty}^{-m},R),
\]
where
$\beta_{\infty}=PQ\in\pi_1(\orbb(\infty;m))$.
Since $\beta_{\infty}^m=1$ in $\pi_1(\orbb(\infty;m))$,
we see that $\varphi_*$ is the identity map.

(2) We show that
$\gamma$ satisfies the local condition
(in the definition of a homeomorphism between orbifolds)
at every singular point $x$.
Suppose first that $x$ is contained in the interior of an edge of the singular set.
Then $x$ has a neighborhood homeomorphic to the $2$-handle orbifold $D^2(m)\times [0,1]$
such that the restriction of $\gamma$ to it is given by $\gamma(z,t)=(\bar z, t)$.
This is covered by the self-homeomorphism $\tilde\gamma$ of
the universal cover $D^2\times [0,1]$,
defined by $\tilde\gamma(z,t)=(\bar z,t)$.
(The corresponding automorphism of the local group, $\ZZ/m\ZZ$, of $x$
is given by $[k]\mapsto [-k]$ for every $[k]\in \ZZ/m\ZZ$.)
Suppose next that $x$ is the vertex,
on which the edges of indices $2$, $2$, and $m$ are incident.
Then $x$ has a neighborhood homeomorphic to
the $3$-handle orbifold $B^3(2,2,m)=B^3/D_{2m}$.
Then the restriction of the map $\gamma$ is covered by
the reflection in the disk in $B^3$ containing the axes of
the pair of order $2$ generators of $D_{2m}$.
(The corresponding automorphism of the local group, $D_{2m}$, of $x$
is the identity map.)
Thus we have shown that $\gamma$ is
a self-homeomorphism of the orbifold $\orbb(\infty;m)$.

To show the second assertion, observe by using Figure ~\ref{fig.(2,2,2,infty)-orbifold} that
\[
(\gamma_*(P), \gamma_*(Q), \gamma_*(R))=
(P, \beta_{\infty}Q\beta_{\infty}^{-1},\beta_{\infty}R\beta_{\infty}^{-1}).
\]
By composing the inner automorphism
$\iota:x\mapsto \beta_{\infty}^{-1}x\beta_{\infty}$, we have
\[
(\iota\circ\gamma_*(P), \iota\circ\gamma_*(Q), \iota\circ\gamma_*(R))=
(\beta_{\infty}^{-1}P\beta_{\infty}, Q,R)
=(QPQ,Q,R).
\]
If $m=2$, then $(PQ)^2=\beta_{\infty}^2=1$ and so
$\iota\circ\gamma_*(P)=Q(PQ)=Q(QP)=P$.
Hence $\gamma_*$ is the identity outer automorphism when $m=2$.
}
\end{proof}

We can easily observe that
the restrictions of the homeomorphisms, $\varphi$ and $\gamma$,
to the outer boundary $\check\OO$ act on the set of essential simple loops in $\check\OO$
by the following rule.
\begin{enumerate}[\indent \rm (1)]
\item
$\varphi(\beta_s)=\beta_{A_{(r;m)}(s)}$,
where $A_{(r;m)}$ is the automorphism of
the Farey tessellation $\DD$
which is the parabolic transformation,
centered at the vertex $r$, by $m$ units in the clockwise direction
(i.e., a generator of the infinite cyclic group $C_r(m)$).
\item
$\gamma(\beta_s)=\beta_{-s}$.
\end{enumerate}
Hence we obtain the following corollary.

\begin{corollary}
\label{cor:orbimap2}
For any $r\in\QQQ$ and an integer $m \ge 3$,
the following hold.
\begin{enumerate}[\indent \rm (1)]
\item
The conjugacy classes of $\pi_1(\orbb(r;m))$
determined by the simple loops
$\beta_s$ and $\beta_{A_{(r;m)}(s)}$ are identical.
So, the same conclusion also holds for the
conjugacy classes of
the quotient group $\pi_1(\orbo(r;m))\cong\pi_1(\orbb(r;m))/\llangle \beta_{\infty}^2\rrangle$.

\item
The conjugacy classes of $\pi_1(\orbb(\infty;2))$
determined by the simple loops
$\beta_s$ and $\beta_{-s}$ are identical.
So, the same conclusion also holds for the conjugacy classes of
the quotient group $\pi_1(\orbo(r;m))\cong\pi_1(\orbb(\infty;2))/\llangle \beta_{r}^m\rrangle$.
\end{enumerate}
\end{corollary}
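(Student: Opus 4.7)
The plan is to read off Corollary ~\ref{cor:orbimap2} directly from Lemma ~\ref{lem:orbimap1} together with the two action formulas $\varphi(\beta_s)=\beta_{A_{(r;m)}(s)}$ and $\gamma(\beta_s)=\beta_{-s}$ listed just before the corollary statement. The key principle I invoke is the following: if $h$ is a self-homeomorphism of an orbifold $X$ whose induced outer automorphism of $\pi_1(X)$ is trivial, then for every free homotopy class of loop $\ell$ in $X$ the conjugacy class in $\pi_1(X)$ determined by $h(\ell)$ coincides with the one determined by $\ell$. (Indeed, the ambiguity in promoting $h_*$ to an actual automorphism is exactly an inner automorphism, which preserves conjugacy classes.)

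For part (1), my first step is to apply Lemma ~\ref{lem:orbimap1}(1), which furnishes the self-homeomorphism $\varphi$ of $\orbb(r;m)$ whose induced outer automorphism on $\pi_1(\orbb(r;m))$ is the identity. Feeding the relation $\varphi(\beta_s)=\beta_{A_{(r;m)}(s)}$ into the principle above yields equality of the conjugacy classes of $\beta_s$ and $\beta_{A_{(r;m)}(s)}$ in $\pi_1(\orbb(r;m))$. Part (2) is handled symmetrically: I apply Lemma ~\ref{lem:orbimap1}(2) specialized to $m=2$, which is precisely the case for which $\gamma_*$ is the identity outer automorphism, and combine with the relation $\gamma(\beta_s)=\beta_{-s}$.

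For the ``quotient'' clauses in each part, I need nothing new: the quotient map of groups sends conjugate elements to conjugate elements, so the equality of conjugacy classes just established in $\pi_1(\orbb(r;m))$ (respectively in $\pi_1(\orbb(\infty;2))$) descends automatically to $\pi_1(\orbo(r;m))$ via the identifications $\pi_1(\orbo(r;m))\cong\pi_1(\orbb(r;m))/\llangle \beta_\infty^2\rrangle\cong\pi_1(\orbb(\infty;2))/\llangle \beta_r^m\rrangle$.

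There is no genuine obstacle to overcome at the corollary stage. All of the substantive work, namely verifying that $\varphi$ and $\gamma$ extend to honest orbifold self-homeomorphisms across the $2$- and $3$-handle orbifolds, together with the explicit computation on the generators $P,Q,R$ showing that $\varphi_*$ is the identity in $\mathrm{Out}(\pi_1(\orbb(r;m)))$ and, when $m=2$, that $\gamma_*$ is the identity in $\mathrm{Out}(\pi_1(\orbb(\infty;2)))$, has already been dispatched in Lemma ~\ref{lem:orbimap1}. The corollary is the pure translation of that data into conjugacy-class language.
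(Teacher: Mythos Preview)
Your proposal is correct and matches the paper's own reasoning exactly: the paper states the two action formulas $\varphi(\beta_s)=\beta_{A_{(r;m)}(s)}$ and $\gamma(\beta_s)=\beta_{-s}$ and then simply writes ``Hence we obtain the following corollary,'' leaving the deduction (via Lemma~\ref{lem:orbimap1} and the trivial fact that a self-homeomorphism inducing the identity outer automorphism preserves conjugacy classes) implicit. Your write-up is precisely that implicit argument spelled out, including the correct observation that Lemma~\ref{lem:orbimap1}(2) must be used with $m=2$.
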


\begin{proof}[Proof of Theorem ~\ref{thm:fundametal_domain}]
Suppose that $s$ and $s'$ belong to the same $\RGPC{r}{n}$-orbit.
Since $\RGPC{r}{n}$ of automorphisms of $\DD$
is generated by the three transformations
$s\mapsto -s$,
$A_{(\infty;2)}$
and $A_{(r;m)}$ with $m=2n$,
we see by Corollary ~\ref{cor:orbimap2}
that the conjugacy classes $\beta_s$ and $\beta_{s'}$ in
$\pi_1(\orbo(r;m))$
are equal.
On the other hand,
we can easily see that
the natural action of $\pi_1(\OO)/\pi_1(\PConway)\cong (\ZZ/2\ZZ)^2$
on the conjugacy classes in $\pi_1(\PConway)$
preserves $\alpha_s$,
the pair of conjugacy classes represented by the loop $\alpha_s$
with two possible orientations.
So, the same conclusion holds for the
natural action of $\pi_1(\orbo(r:m))/\pi_1(\orbs(r;n))$
on the conjugacy classes in $\pi_1(\orbs(r;n))$.
Hence the precending result
implies that the conjugacy classes
$\alpha_s=\beta_s^2$ and $\alpha_{s'}=\beta_{s'}^2$ in
$\Hecke(r;n)=\pi_1(\orbs(r;n))$ are equal.
\end{proof}

\begin{proof}[Proof of Theorem ~\ref{thm:epimorophism}]
Suppose first that $s$ belongs to the
$\RGPC{r}{n}$-orbit of $\infty$.
Then the conjugacy class of $\alpha_s$ in
$\Hecke(r;n)\subset \pi_1(\orbo(r;m))$
is trivial by Theorem ~\ref{thm:fundametal_domain}.
Since the conjugacy class of
$\alpha_{\infty}=\beta_{\infty}^2$ in $\pi_1(\orbo(r;m))$
is also trivial by definition,
the homomorphism
$\pi_1(\PConway)\mapsto \pi_1(\orbo(r;m))$
descends to a homomorphism
\[
G(K(s))\cong \pi_1(\PConway)/ \llangle\alpha_{\infty},\alpha_s\rrangle
\to \pi_1(\OO)/ \llangle\beta_{\infty}^2,\beta_r^m\rrangle \cong
\pi_1(\orbo(r;m)).
\]
Since the image of this homomorphism is equal to the Heckoid group
$\Hecke(r;n)$
by Proposition ~\ref{prop:even_Heckoid_group} and
Definition ~\ref{def:odd_Heckoid_group},
we obtain an epimorphism $G(K(s)) \to \Hecke(r;n)$,
which is apparently upper-meridian-pair-preserving.

Suppose next that $s+1$ belongs to the $\RGPC{r}{n}$-orbit of $\infty$.
Then there is an epimorphism $G(K(s+1)) \to \Hecke(r;n)$ by the above argument.
Since there is an upper-meridian-pair-preserving
isomorphism $G(K(s))\cong G(K(s+1))$,
we obtain the desired epimorphism.
\end{proof}

At the end of this section,
we give a characterization of those rational numbers
which belong to the $\RGPC{r}{n}$-orbit of $\infty$.
Since $\Hecke(r;n)$ is isomorphic to $\Hecke(r+1;n)$,
we may assume in the remainder of this paper that $0<r<1$.
For the continued fraction expansion
\begin{center}
\begin{picture}(230,70)
\put(0,48){$\displaystyle{
r=[a_1,a_2, \dots,a_k]:=
\cfrac{1}{a_1+
\cfrac{1}{ \raisebox{-5pt}[0pt][0pt]{$a_2 \, + \, $}
\raisebox{-10pt}[0pt][0pt]{$\, \ddots \ $}
\raisebox{-12pt}[0pt][0pt]{$+ \, \cfrac{1}{a_k}$}
}}}$}
\end{picture}
\end{center}
where $k \ge 1$, $(a_1, \dots, a_k) \in (\mathbb{Z}_+)^k$, and $a_k \ge 2$,
let $\cfr$, $\cfr^{-1}$, $\epsilon\cfr$ and $\epsilon\cfr^{-1}$, with $\epsilon\in\{-,+\}$,
be the finite sequences defined as follows:
\begin{align*}
\cfr &= (a_1, a_2,\dots, a_k), \quad &
\cfr^{-1} &=(a_k,a_{k-1},\dots,a_1),\\
\epsilon\cfr &=(\epsilon a_1,\epsilon a_2,\dots,
\epsilon a_k), \quad &
\epsilon \cfr^{-1} &=(\epsilon a_k,\epsilon
a_{k-1},\dots,
\epsilon a_1).
\end{align*}
Then we have the following proposition,
which can be proved by the argument in \cite[Section ~5.1]{Ohtsuki-Riley-Sakuma}.

\begin{proposition}
\label{prop:continued fraction}
Let $r$ be as above and $n>1$ an integer or a half-integer.
Set $m=2n$.
Then a rational number $s$
belongs to the $\RGPC{r}{n}$-orbit of $\infty$
if and only if
$s$ has the following continued
fraction expansion{\rm :}
\[
s=
2c+[\epsilon_1\cfr,mc_1,-\epsilon_1\cfr^{-1},
2c_2,\epsilon_2\cfr,mc_3,-\epsilon_2\cfr^{-1},
\dots,
2c_{2t-2},\epsilon_t \cfr,mc_{2t-1},-\epsilon_t \cfr^{(-1)}]
\]
for some positive integer $t$, $c\in\ZZ$,
$(\epsilon_1,\epsilon_2,\dots,\epsilon_t) \in \{-,+\}^t$
and $(c_1,c_2,\dots,c_{2t-1})\in\ZZ^{2t-1}$.
\end{proposition}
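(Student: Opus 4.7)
My plan is to perform a combinatorial analysis of the action of $\RGPC{r}{n}$ on continued-fraction expansions of rationals, following the template established in \cite[Section ~5.1]{Ohtsuki-Riley-Sakuma}. The first step is to encode the two types of generators in continued-fraction language. The group $\RGP{\infty}$, generated by reflections in the Farey edges at $\infty$, consists of the transformations $s\mapsto \pm s + 2c$ with $c\in\ZZ$; on continued fractions these act by negating every entry $a_j\mapsto -a_j$ and by adding an even integer to the leading term. The generator $A_{(r;m)}$ of $C_r(m)$ is $g\,T^m\,g^{-1}$, where $T\colon s\mapsto s+1$ and $g\in\PSL(2,\ZZ)$ is any Möbius transformation with $g(\infty)=r$, obtained from $r=[\cfr]$ as the corresponding product of standard $\PSL(2,\ZZ)$-matrices. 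A direct matrix computation then identifies $A_{(r;m)}^{c}$ with the operation that inserts a block $\cfr,\,mc,\,-\cfr^{-1}$ into a continued fraction (up to a sign flip that can be absorbed into the $\epsilon_i$'s, and an even translation of the leading term that can be absorbed into $\RGP{\infty}$).

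With this dictionary in hand, I would prove the ``if'' direction by induction on the number $t$ of blocks. The base case $t=1$ reduces, after applying $A_{(r;m)}^{c_1}$ (and possibly conjugating by $s\mapsto -s$ to realise the sign $\epsilon_1$) followed by an element of $\RGP{\infty}$, to $\infty$ itself. For the inductive step the trailing block $\epsilon_t\cfr,\,mc_{2t-1},\,-\epsilon_t\cfr^{-1}$ is stripped off by an element of the form $\RGP{\infty}\cdot A_{(r;m)}^{\mp c_{2t-1}}\cdot \RGP{\infty}$, yielding a continued fraction of the same form with $t-1$ blocks. Conversely, an arbitrary element of $\RGPC{r}{n}$ can be normalised to a word alternating between $\RGP{\infty}$-elements and powers $A_{(r;m)}^{\pm c_i}$; applying such a word to $\infty$ and iterating the two continued-fraction operations above produces exactly a continued fraction of the displayed shape, with the block structure mirroring the alternation pattern of the word. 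Uniqueness of the standard continued-fraction expansion (with the last entry $\ge 2$) then forces every $s$ in the $\RGPC{r}{n}$-orbit of $\infty$ to admit an expansion of the claimed form.

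I expect the main obstacle to be the matrix identity that translates $A_{(r;m)}^{c}$ into the insertion rule for the block $\cfr,\,mc,\,-\cfr^{-1}$, together with the careful bookkeeping of the signs $\epsilon_i$ that arises when this insertion is combined with the ``negate all entries'' move from $\RGP{\infty}$. This calculation is a parabolic variant of the argument of \cite[Section ~5.1]{Ohtsuki-Riley-Sakuma}, where both generators of the ambient group are reflections: the combinatorial skeleton is identical, but the central entry of each inserted block is promoted from a fixed constant to the arbitrary multiple $mc_i$, reflecting the infinite-cyclic nature of $C_r(m)$ as opposed to an order-two reflection. I would therefore import the framework of that reference and verify that the only substantive modification needed is the shape of the inserted middle entry, which does not disturb the inductive peeling-off argument.
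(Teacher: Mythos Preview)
The paper does not actually supply a proof of this proposition: it simply states that it ``can be proved by the argument in \cite[Section~5.1]{Ohtsuki-Riley-Sakuma}.'' Your proposal is precisely to carry out that argument, adapted so that the order-two reflection at $r$ in the Ohtsuki--Riley--Sakuma setting is replaced by the infinite-cyclic parabolic $C_r(m)$, which is exactly the modification the paper implicitly has in mind; so your approach coincides with the paper's.
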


\begin{remark}
\label{rem:Riley's_theorem}
{\rm
Riley's Theorem ~B and Theorem ~3 in \cite{Riley2}
imply the following.
Let $\alpha$ and $\beta$ be relatively prime integers with $1\le \beta <\alpha$.
For integers $d\ge 2$, $m\ge 3$, and $e\ge 1$,
consider the $2$-bridge link $K(\beta^*/\alpha^*)$,
where $(\alpha^*,\beta^*)=(\alpha^d m,\alpha^{d-1}m(\alpha-\beta)+e)$.
Then there is an epimorphism from the link group $G(K(\beta^*/\alpha^*))$ onto
the Heckoid group $\Hecke(\beta/\alpha;n)$, where $n=m/2$.
This result corresponds to the case when $r=(\alpha-\beta)/\alpha$
and $s=[\cfr,mc,-\cfr^{-1}]$, where $c=\epsilon\alpha^{d}$ with $\epsilon=\pm 1$
in Proposition ~\ref{prop:continued fraction}.
In fact, a simple calculation shows
\[
s=(\alpha^{d-1}m(\alpha-\beta)+(-1)^k\epsilon)/(\alpha^d m)=\beta^*/\alpha^*,
\]
where $k$ is the length of $\cfr$ and $\epsilon$ is chosen so that
$(-1)^k\epsilon=e$.
Thus
Theorem ~\ref{thm:epimorophism} and
Proposition ~\ref{prop:continued fraction} imply that
there is an epimorphism from
 the link group $G(K(\beta^*/\alpha^*))= G(K(s))$
onto the Heckoid group $\Hecke(r;n)\cong \Hecke(1-r;n)= \Hecke(\beta/\alpha;n)$,
recovering Riley's result.
}
\end{remark}

\section{Topological description of odd Heckoid orbifolds}
\label{sec:topological-description}

In this section,
we show, following the sketch of Agol ~\cite{Agol}, that
the orbifold $\orbo(r;m)$ and
the odd Heckoid orbifold $\orbs(r;n)$
are depicted as in Figures ~\ref{odd-Heckoid-orbifold1} and \ref{odd-Heckoid-orbifold2}.
Here, we employ the following convention.

\begin{figure}[htbp]
\begin{center}
\includegraphics{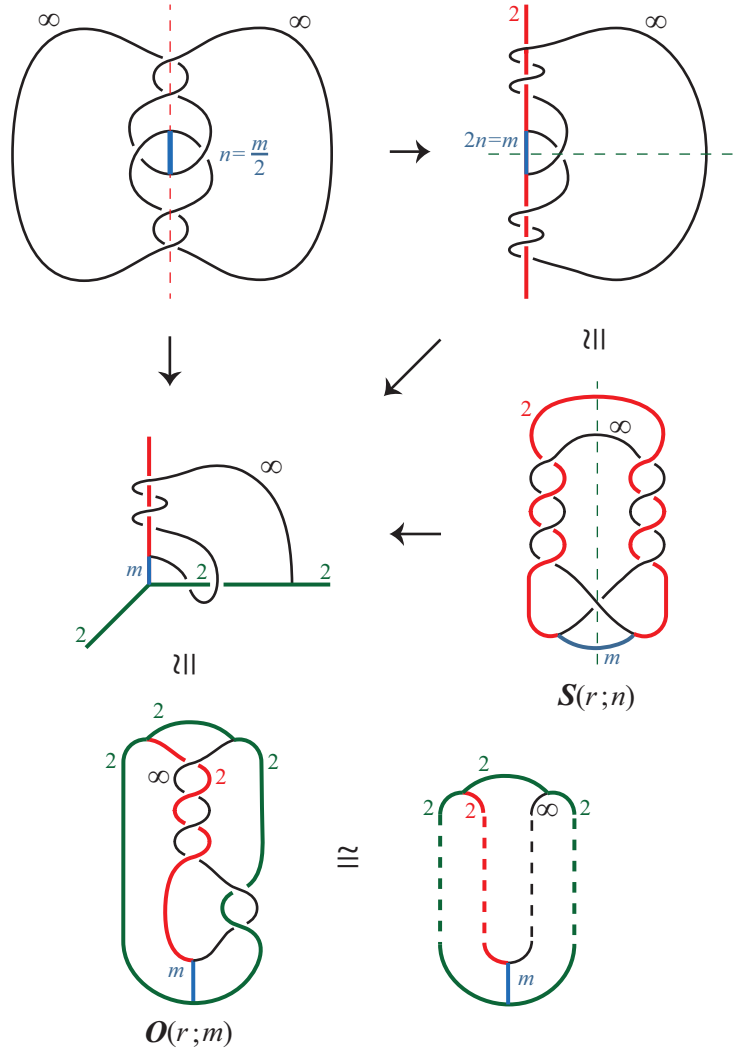}
\end{center}
\caption{\label{odd-Heckoid-orbifold1}
The case when $K(r)$ is a knot
and $m=2n>1$ is an odd integer.
Here $r=2/9=[4,2]$.
The odd Heckoid orbifold $\orbs(r;n)$ (middle right)
is a $\ZZ/2\ZZ$-covering of $\orbo(r;m)$ (lower left).
The upper left figure is not an orbifold, but
is a hyperbolic cone manifold.
The odd Heckoid orbifold $\orbs(r;n)$
is the quotient of the cone manifold
by the $\pi$-rotation around the axis containing the singular set.}
\end{figure}

\begin{figure}[htbp]
\begin{center}
\includegraphics{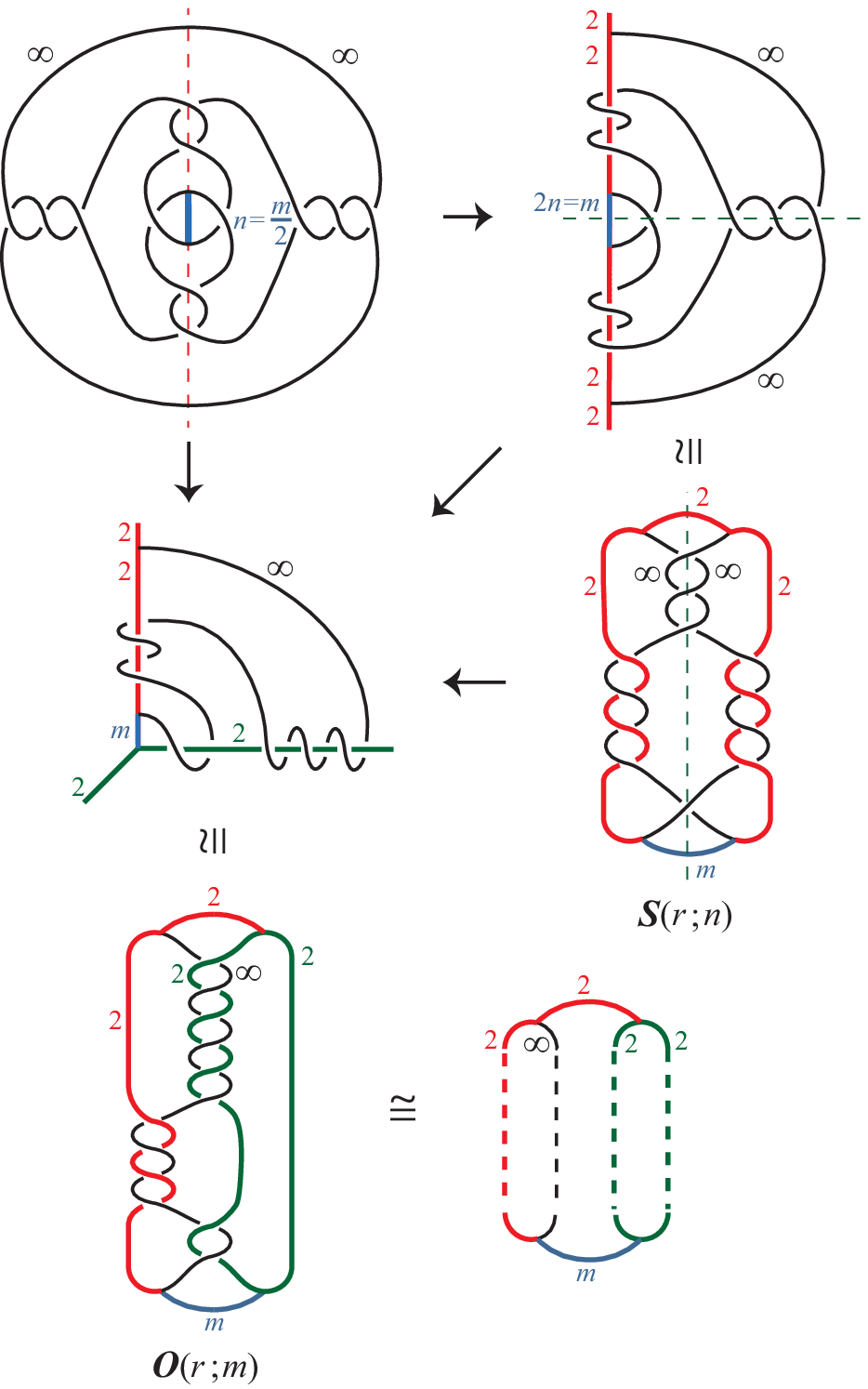}
\end{center}
\caption{\label{odd-Heckoid-orbifold2}
The case when $K(r)$ is a $2$-component link
and $m=2n>1$ is an odd integer.
Here $r=9/56=[6,4,2]$.
The odd Heckoid orbifold $\orbs(r;n)$ (middle right)
is a $\ZZ/2\ZZ$-covering of $\orbo(r;m)$ (lower left).
The upper left figure is not an orbifold, but
is a hyperbolic cone manifold.
The odd Heckoid orbifold $\orbs(r;n)$
is the quotient of the cone manifold
by the $\pi$-rotation around the axis containing the singular set.}
\end{figure}

\begin{convention}
\label{conv:orbifold}
{\rm
Let $\Sigma$ be a trivalent graph properly embedded in a
compact $3$-manifold $M$ such that each edge $e$ of $\Sigma$ is given
a {\it weight} $w(e)\in \NN_{\ge 2}\cup \{\infty\}$.
Here, a loop component of $\Sigma$ is regarded as an edge.
Assume that if $v$ is a (trivalent) vertex and $e_1,e_2,e_3$ are the edges incident on $v$,
then either some $w(e_i)$ is $\infty$ or the following inequality holds:
\[
\frac{1}{w(e_1)}+\frac{1}{w(e_2)}+\frac{1}{w(e_3)}>1.
\]
Then the weighted graph $(M,\Sigma,w)$ determines the following $3$-orbifold.
\begin{enumerate}[\indent \rm (a)]
\item
Let $\Sigma_{\infty}$ be the subgraph consisting of those edges with weight $\infty$.
Then the underlying space of the orbifold is the complement
of an open regular neighborhood of $\Sigma_{\infty}$.

\item
The singular set of the orbifold is the intersection of $\Sigma-\Sigma_{\infty}$
with the underlying space,
where the index is given by the weight.
(We identify an edge of the singular set with the corresponding edge of $\Sigma$.)
\end{enumerate}
We denote the orbifold by the same symbol $(M,\Sigma,w)$.
The part of the boundary of the orbifold $(M,\Sigma,w)$
contained in $\partial M$ is called the {\it outer-boundary} of $(M,\Sigma,w)$
and is denoted by $\partial_{out}(M,\Sigma,w)$.
}
\end{convention}

In this section, we prove the following propositions.

\begin{proposition}
\label{prop:quotient-Heckoid-orbifold}
For a rational number $r$ and an integer $m\ge 2$,
the orbifold $\orbo(r;m)$ is homeomorphic
to the orbifold $(S^3,K(r)\cup\tau_+\cup\tau_-,w)$,
where $\tau_+$ and $\tau_-$ are the upper and lower tunnels
of $K(r)$ and
the weight function $w$ is defined by the following rule.
\begin{enumerate}[\indent \rm (a)]
\item
$w(\tau_+)=2$ and $w(\tau_-)=m$.
\item
One of the four edges, say $J$, of $K(r)\cup\tau_+\cup\tau_-$
contained in $K(r)$ has weight $\infty$ and the
remaining three edges have weight $2$.
\end{enumerate}
\end{proposition}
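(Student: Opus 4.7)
The plan is to prove Proposition \ref{prop:quotient-Heckoid-orbifold} by directly analyzing the gluing construction $\orbo(r;m)=\orbb(\infty;2)\cup_{\check\OO}\orbb(r;m)$. First I would give explicit topological descriptions of each of the two building blocks, then glue along $\check\OO$ and verify that the result is the claimed trivalent-graph orbifold on $S^3$.

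To describe $\orbb(r;m)$: the orbifold $\check\OO\times I$ is a 3-ball with three vertical order-2 singular arcs, one from each cone point of $\check\OO$ (call them $x_1,x_2,x_3$). The essential simple loop $\beta_r\subset\check\OO\times\{0\}$ separates two of these cone points, say $x_1,x_2$, from the third $x_3$ and the boundary $\partial\check\OO$. Attaching the 2-handle orbifold $D^2(m)\times I$ along $\beta_r\times\{0\}$ splits the bottom face into an inside disk containing $x_1,x_2$ which together with one end of the 2-handle forms a spherical component $S^2(2,2,m)$, and an outside annular region which together with the other end of the 2-handle yields a disk bearing two cone points of orders $2$ and $m$. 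Capping the spherical component with $B^3(2,2,m)$ introduces an interior $(2,2,m)$-trivalent vertex $v^-$. The result: $\orbb(r;m)$ is a 3-ball whose boundary sphere is divided by $\partial\check\OO$ into the outer boundary $\check\OO$ and a second disk $\Delta_r$ with two cone points of orders $2$ and $m$; the singular set consists of two order-2 arcs from $x_1,x_2$ to $v^-$ (through the 3-handle), one order-2 arc from $x_3$ running through the outside region to the order-2 point of $\Delta_r$, and one order-$m$ arc from $v^-$ through the 2-handle to the order-$m$ point of $\Delta_r$. The case $r=\infty$, $m=2$ gives the analogous description of $\orbb(\infty;2)$, with an interior $(2,2,2)$-vertex $v^+$ and a second disk $\Delta_\infty$ with two order-2 cone points.

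Gluing along $\check\OO$ combines the two underlying 3-balls into a single 3-ball, while the disks $\Delta_\infty$ and $\Delta_r$ join along $\partial\check\OO$ to form a boundary sphere with four cone points (three of order 2 and one of order $m$); this sphere is identified with $\partial N(J)$ for the claimed weight-$\infty$ arc $J$, so the underlying space is $S^3-N(J)$. The singular arcs on each side are concatenated at the now-internal cone points of $\check\OO$, and the pattern of joining depends on which of $\{x_1,x_2,x_3\}$ lie inside both $\beta_\infty$ and $\beta_r$. I would then identify $v^+$ with the tunnel endpoint $v_+^2$ of $\tau_+$, $v^-$ with the endpoint $v_-^2$ of $\tau_-$, and match the five interior edges of the resulting graph with the three non-$J$ sub-arcs of $K(r)$ together with the truncated tunnels $\tau_\pm$, checking that the weights $(2$ and $m$ respectively on $\tau_+$ and $\tau_-$, $2$ on the $K(r)$-sub-arcs, $\infty$ on $J)$ agree with the claim. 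The main obstacle is this final combinatorial verification: since $\beta_\infty$ and $\beta_r$ share either one or two inside cone points according as the denominator of $r$ is odd (so $K(r)$ is a knot) or even (so $K(r)$ is a two-component link), the two cases corresponding to Figures \ref{odd-Heckoid-orbifold1} and \ref{odd-Heckoid-orbifold2} must be handled separately. In each case, one must use the description of $\beta_r$ as a word in the generators $P,Q,R$ of $\pi_1(\OO)$ -- encoded by the Farey combinatorics of $r$ -- to identify the concrete pair of cone points encircled by $\beta_r$ and verify that the assembled graph is isotopic in $S^3$ to the standard picture of $K(r)\cup\tau_+\cup\tau_-$.
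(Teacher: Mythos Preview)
Your outline is correct in spirit, but it takes a considerably more laborious route than the paper's proof, and your proposed final step imports machinery that is not needed here.

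Both you and the paper start from the decomposition $\orbo(r;m)=\orbb(\infty;2)\cup_{\check\OO}\orbb(r;m)$. The difference is what each side is compared to. The paper \emph{also} decomposes the target along the bridge sphere,
\[
(S^3,K(r)\cup\tau_+\cup\tau_-,w)=(B^3,t(\infty)\cup\tau_+,w_+)\cup_{\check\PConway}(B^3,t(r)\cup\tau_-,w_-),
\]
where $\check\PConway$ is the Conway sphere with one puncture removed and the other three punctures replaced by order-$2$ cone points. The key observation is that there is a canonical homeomorphism $g:\check\OO\to\check\PConway$ carrying $\beta_s$ to $\alpha_s$ for every $s\in\QQQ$. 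Since $\orbb(r;m)$ and $(B^3,t(r)\cup\tau_-,w_-)$ are obtained from $\check\OO\times I$ and $\check\PConway\times I$, respectively, by the \emph{same} handle-attachment recipe along the slope-$r$ curve, the map $g$ extends to a homeomorphism $f_-:\orbb(r;m)\to(B^3,t(r)\cup\tau_-,w_-)$, and similarly $f_+:\orbb(\infty;2)\to(B^3,t(\infty)\cup\tau_+,w_+)$. Because $f_+$ and $f_-$ restrict to the same $g$ on the outer boundary, they glue to the desired homeomorphism. No case analysis and no Farey combinatorics enter.

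By contrast, your plan is to assemble the singular graph of $\orbo(r;m)$ explicitly and then \emph{recognize} it as $K(r)\cup\tau_+\cup\tau_-$ in $S^3$. This can be made to work, but it forces you to track the isotopy class of $\beta_r$ in $\check\OO$ (not merely which pair of cone points it encloses) and to redo, in effect, the classification of rational tangles. Your proposed parity split and the appeal to Figures~\ref{odd-Heckoid-orbifold1} and~\ref{odd-Heckoid-orbifold2} are really the content of Proposition~\ref{prop:odd-Heckoid-orbifold} about the covering orbifold $\orbs(r;n)$; the orbifold $\orbo(r;m)$ itself has a uniform description independent of whether $K(r)$ is a knot or a link, so that case split is unnecessary here.
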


\begin{proposition}
\label{prop:odd-Heckoid-orbifold}
For a rational number $r=q/p$,
where $p$ and $q$ are relatively prime integers such that $0\le q<p$,
and a non-integral half-integer
$n$ greater than $1$,
the odd Heckoid orbifold $\orbs(r;n)$
is described as follows.
\begin{enumerate}[\indent \rm (1)]
\item
Suppose that $K(r)$ is a knot, i.e., $p$ is odd
{\rm (}see Figure ~{\rm \ref{odd-Heckoid-orbifold1})}.
Consider the $2$-bridge knot $K(\hat r)$,
where $\hat r=(q/2)/p$ or $((p+q)/2)/p$
according to whether $q$ is even or odd.
Let $\tau_-$ be the lower tunnel of $K(\hat r)$, and
let $J_1$ and $J_2$ be the edges of $K(\hat r)\cup\tau_-$
such that $K(\hat r)=J_1\cup J_2$.
Then $\orbs(r;n)$ is homeomorphic to the orbifold
$(S^3,K(\hat r)\cup\tau_-,\hat w)$,
where the weight function $\hat w$ is defined as follows.
\begin{enumerate}[\indent \rm (a)]
\item
$\hat w(\tau_-)=m$ with $m=2n$.
\item
$\hat w(J_1)=\infty$ and $\hat w(J_2)=2$.
\end{enumerate}

\item
Suppose that $K(r)$ has two components, i.e., $p$ is even
{\rm (}see Figure ~{\rm \ref{odd-Heckoid-orbifold2})}.
Consider the $2$-bridge link $K(\hat r)$,
where $\hat r=q/(p/2)$.
Let $\tau_+$ and $\tau_-$ be the upper and lower tunnels of $K(\hat r)$,
and let $J_1$ and $J_2$ be the union of mutually disjoint arcs
of $K(\hat r)=t(\infty)\cup t(\hat r)$ bounded by $\partial(\tau_+\cup \tau_-)$
such that $K(\hat r)=J_1\cup J_2$
and such that $J_i\cap t(\infty)$ $(i=1,2)$ is
equal to the closure of the intersection of $t(\infty)$
with a component of $B^3-D_0$,
where $D_0$ is a ``horizontal'' disk embedded in $(B^3,t(\infty))$
bounded by the slope $0$ simple loop $\alpha_0$,
which intersects $t(\infty)$ transversely in two points and
contains the core tunnel $\tau_+$ of $(B^3,t(\infty))$
{\rm (}see Figure ~{\rm \ref{covering-rational-tangle}(b))}.
Then $\orbs(r;n)$ is homeomorphic to the orbifold
$(S^3,K(\hat r)\cup\tau_+\cup\tau_-,\hat w)$,
where the weight function $\hat w$ is defined as follows.
\begin{enumerate}[\indent \rm (a)]
\item
$\hat w(\tau_+)=2$ and $\hat w(\tau_-)=m$.
\item
The {\rm (}two{\rm )} components of $J_1$ have weight $\infty$,
and the {\rm (}two{\rm )} components of $J_2$ have weight $2$.
\end{enumerate}
\end{enumerate}
\end{proposition}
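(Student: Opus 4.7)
The plan is to identify $\orbs(r;n)$ as an explicit $2$-fold covering of $\orbo(r;m)$, using the weighted-graph description of $\orbo(r;m)$ provided by Proposition \ref{prop:quotient-Heckoid-orbifold}. The argument proceeds in three stages: an index computation showing the covering degree is $2$, an identification of the covering homomorphism in terms of edge meridians, and a topological identification via a branched-cover construction.

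For the index computation, recall from Definition \ref{def:odd_Heckoid_group} that $\Hecke(r;n)$ is the image of $\pi_1(\PConway)$ in $\pi_1(\orbo(r;m)) = \pi_1(\OO)/N$ with $N = \llangle \beta_\infty^2, \beta_r^m \rrangle$, so the covering degree equals $[\pi_1(\OO) : \pi_1(\PConway)\cdot N]$. Working inside $\pi_1(\OO)/\pi_1(\PConway) \cong (\ZZ/2\ZZ)^2$, the element $\beta_\infty^2 = \alpha_\infty$ already lies in $\pi_1(\PConway)$, while $\beta_r^m$ is congruent to $\beta_r$ modulo $\pi_1(\PConway)$ since $m = 2n$ is odd and $\beta_r^2 = \alpha_r \in \pi_1(\PConway)$. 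Because $[\beta_r]$ is nontrivial in $(\ZZ/2\ZZ)^2$, it follows that $\pi_1(\PConway)\cdot N$ has index $2$ in $\pi_1(\OO)$, and so $\orbs(r;n) \to \orbo(r;m)$ is a connected $2$-fold orbifold covering.

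Next I would identify the covering $\eta\colon \pi_1(\orbo(r;m)) \to \ZZ/2$ with kernel $\Hecke(r;n)$ by evaluating $\eta$ on the meridian of each edge of the singular graph $K(r)\cup\tau_+\cup\tau_-$. Tracing through the composition $\pi_1(\OO) \to (\ZZ/2\ZZ)^2 \to (\ZZ/2\ZZ)^2/\langle[\beta_r]\rangle$, the meridian of $\tau_+$, which corresponds to $\beta_\infty$, receives the nontrivial label precisely when $[\beta_\infty] \neq [\beta_r]$ in $(\ZZ/2\ZZ)^2$, i.e.\ when $p$ is odd. The meridian of $\tau_-$, corresponding to $\beta_r$, is trivially labeled in all cases. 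The four arcs of $K(r)$ split into two pairs according to how their meridians pair with $[\beta_r]$. The $2$-fold covering of $|\orbo(r;m)|$ branched along the edges with nontrivial labels then realizes $\orbs(r;n)$ as a weighted-graph orbifold.

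Finally I would match this branched cover with the stated topological descriptions using the classical doubling correspondence for $2$-bridge links. When $p$ is odd, $\tau_+$ is unfolded: the double branched cover along an arc embedded in the upper trivial tangle $(B^3, t(\infty))$ reassembles the two sides of the bridge sphere into a single rational tangle of slope $\hat r = (q/2)/p$ or $((p+q)/2)/p$, according as $q$ is even or odd. The tunnel $\tau_-$ persists with weight $m$, and the three weight-$2$ arcs of $K(r)$ collapse onto the two edges $J_1$, $J_2$ of $K(\hat r)\cup\tau_-$ with weights $\hat w(J_1)=\infty$ and $\hat w(J_2)=2$. When $p$ is even, $[\beta_\infty] = [\beta_r]$, so $\tau_+$ is not unfolded but persists with weight $2$; here $K(r)$ lifts to the $2$-bridge link $K(\hat r)$ with $\hat r = q/(p/2)$, and the partition of $K(\hat r)$ into $J_1 \cup J_2$ is dictated by the horizontal bridge disk $D_0$ across which the two sheets of the cover alternate. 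The principal obstacle is this last bookkeeping step: correctly determining which singular edges are unfolded from the class $[\beta_r] \in (\ZZ/2\ZZ)^2$, and then recognizing the resulting branched cover as precisely the $2$-bridge link complement plus tunnel data described in the proposition. This ultimately reduces to combining the continued-fraction arithmetic of slope doubling with a careful analysis of how $[\beta_r]$ depends on the parities of $p$ and $q$.
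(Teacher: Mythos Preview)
Your overall strategy matches the paper's: both establish that the covering $\orbs(r;n)\to\orbo(r;m)$ has degree $2$ by computing $\pi_1(\OO)/\llangle\pi_1(\PConway),\beta_\infty^2,\beta_r^m\rrangle\cong(\ZZ/2\ZZ)^2/\langle[\beta_r]\rangle\cong\ZZ/2\ZZ$, and both then identify the cover topologically according to the parities of $p$ and $q$. The paper, however, does not pass through the weighted-graph description of Proposition~\ref{prop:quotient-Heckoid-orbifold}; instead it works directly with the decomposition $\orbo(r;m)=\orbb(\infty;2)\cup\orbb(r;m)$, takes the preimages $\tilde\orbb(\infty;2)$ and $\tilde\orbb(r;m)$ of each piece, and describes each explicitly (together with its covering involution $h_{\pm}$) as a rational-tangle orbifold, with the two descriptions of $\tilde\orbb(\infty;2)$ corresponding to $(p,q)\equiv(1,0)$ or $(0,1)\pmod 2$. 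The slope $\hat r$ --- which you correctly flag as the principal obstacle --- is then read off from a short Claim recording that the restriction of the covering projection to the outer boundaries sends the pair $(\alpha_0,\alpha_\infty)$ to $(\alpha_0,\alpha_\infty^2)$ or $(\alpha_0^2,\alpha_\infty)$ according to parity; tracking the boundary of a meridional disk for $\tau_-$ through this correspondence gives $\hat r$ directly. Your branched-cover picture in the last paragraph is a bit off geometrically: the bridge sphere lifts to a bridge sphere and each rational-tangle piece lifts to a rational-tangle piece, so nothing is ``reassembled'' or ``collapsed''. The paper also handles the case $(p,q)\equiv(1,1)\pmod 2$ separately, via the tunnel-preserving homeomorphism $(S^3,K(q/p))\cong(S^3,K((p+q)/p))$.
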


\begin{remark}
{\rm
(1) Because of the $(\ZZ/2\ZZ)^2$-symmetry of $2$-bridge links,
the choice of the edge $J$ in $K(r)$
in Proposition ~\ref{prop:quotient-Heckoid-orbifold}
and that of the edges $J_1$ and $J_2$ in $K(\hat r)$
in Proposition ~\ref{prop:odd-Heckoid-orbifold}
do not affect the homeomorphism class of the resulting orbifolds.

(2) By the announcement in \cite[Section ~3 of Preface]{ASWY},
there exist hyperbolic cone manifolds as illustrated in the upper left figures
in Figures ~\ref{odd-Heckoid-orbifold1} and \ref{odd-Heckoid-orbifold2}.
The odd Heckoid orbifolds are $\ZZ/2\ZZ$-quotients of the cone manifolds.}
\end{remark}

\begin{proof}[Proof of Proposition ~\ref{prop:quotient-Heckoid-orbifold}]
Recall that
$\orbo(r;m)=\orbb(\infty;2)\cup \orbb(r;m)$
and note that
$(S^3,K(r)\cup\tau_+\cup\tau_-,w)=
(B^3,t(\infty)\cup\tau_+,w_+)\cup (B^3,t(r)\cup\tau_-,w_-)$,
where $w_{\pm}$ are \lq\lq restrictions'' of $w$.
We can observe that there are homeomorphisms
$f_+:\orbb(\infty;2)\to (B^3,t(\infty)\cup\tau_+,w_+)$ and
$f_-:\orbb(r;m)\to (B^3,t(r)\cup\tau_-,w_-)$
such that the restriction of each of $f_{\pm}$
to the outer-boundary determines a homeomorphism
from $\check\OO$ to the $2$-orbifold, $\check\PConway$,
obtained from the Conway sphere $\PConway$
by removing an open regular neighborhood of a puncture
and filling in order $2$ cone points to the remaining punctures.
Moreover, each of the homeomorphisms
maps the (isotopy class of the) simple loop $\beta_s$ in $\OO$
to the the (isotopy class of the) simple loop $\alpha_s$ in $\PConway$
for every $s\in\QQQ$.
Thus we can choose $f_{\pm}$ so that they are consistent with the gluing
maps in the constructions of $\orbo(r;m)$ and $(S^3,K(r)\cup\tau_+\cup\tau_-,w)$;
so, $f_+$ and $f_-$ determine the desired homeomorphism
from $\orbo(r;m)$ to $(S^3,K(r)\cup\tau_+\cup\tau_-,w)$.
\end{proof}

\begin{proof}[Proof of Proposition ~\ref{prop:odd-Heckoid-orbifold}]
By Definition ~\ref{def:odd_Heckoid_group},
the odd Heckoid group $\Hecke(r;n)$ is equal to the kernel
of the natural projection
\[
\pi_1(\orbo(r;m))\to \pi_1(\orbo(r;m))/\psi(\pi_1(\PConway)),
\]
where $m=2n$ is an odd integer.
Thus the Heckoid orbifold $\orbs(r;n)$
is the regular covering of $\orbo(r;m)$
with the covering transformation group
\[
\pi_1(\orbo(r;m))/\psi(\pi_1(\PConway))
\cong
\pi_1(\OO)/\llangle \pi_1(\PConway), \beta_{\infty}^2, \beta_r^m\rrangle.
\]
Note that $\pi_1(\OO)/\pi_1(\PConway)\cong (\ZZ/2\ZZ)^2$
is generated by the homology classes $[\beta_0]$ and $[\beta_{\infty}]$.
Since $[\beta_{r}]=p[\beta_0]+q[\beta_{\infty}]$ and since $m$ is odd,
the covering transformation group is isomorphic to
\[
\langle [\beta_0], [\beta_{\infty}] \ | \ p[\beta_0]+q[\beta_{\infty}]\rangle_{(2)}
\cong \ZZ/2\ZZ,
\]
where the suffix $(2)$ represents that this is a presentation
as a $\ZZ/2\ZZ$-module.
Let $\tilde\orbb(\infty;2)$ and $\tilde\orbb(r;m)$, respectively,
be the inverse images of the suborbifolds $\orbb(\infty;2)$ and $\orbb(r;m)$
under the $2$-fold covering $\orbs(r;n)\to \orbo(r;m)$.
Then, by the above description of the covering transformation group,
the covering orbifold $\tilde\orbb(\infty;2)$ and its covering involution, $h_+$,
are described as follows.
\begin{enumerate}[\indent \rm (a)]
\item
If $(p,q)\equiv (1,0) \pmod2$,
then $\tilde\orbb(\infty;2)$ is identified with the orbifold
$(B^3,t(\infty),\hat w_+)$, where the weight function $\hat w_+$
takes the value $\infty$ on one of the components of $t(\infty)$
and the value $2$ on the other component.
Under this identification,
the covering involution $h_+$ is the $\pi$-rotation
whose axis contains the core tunnel
(see Figure ~\ref{covering-rational-tangle}(a)).
\item
If $(p,q)\equiv (0,1) \pmod2$,
then $\tilde\orbb(\infty;2)$ is identified with the orbifold
$(B^3,t(\infty)\cup\tau_+,\hat w_+)$,
where $\tau_+$ is the core tunnel, and the weight function $\hat w_+$
is given by the following rule:
$\hat w_+(\tau_+)=2$, and $\hat w_+$
takes the value $\infty$
on a pair of edges whose interiors are
contained in one of the components the complement
of the horizontal disk $D_0$ in $B^3$,
and the value $2$ on
the remaining pair of edges.
Under this identification,
$h_+$ is the $\pi$-rotation
whose axis bisects $\tau_+$ (see Figure ~\ref{covering-rational-tangle}(b)).
\end{enumerate}

\begin{figure}[htbp]
\begin{center}
\includegraphics{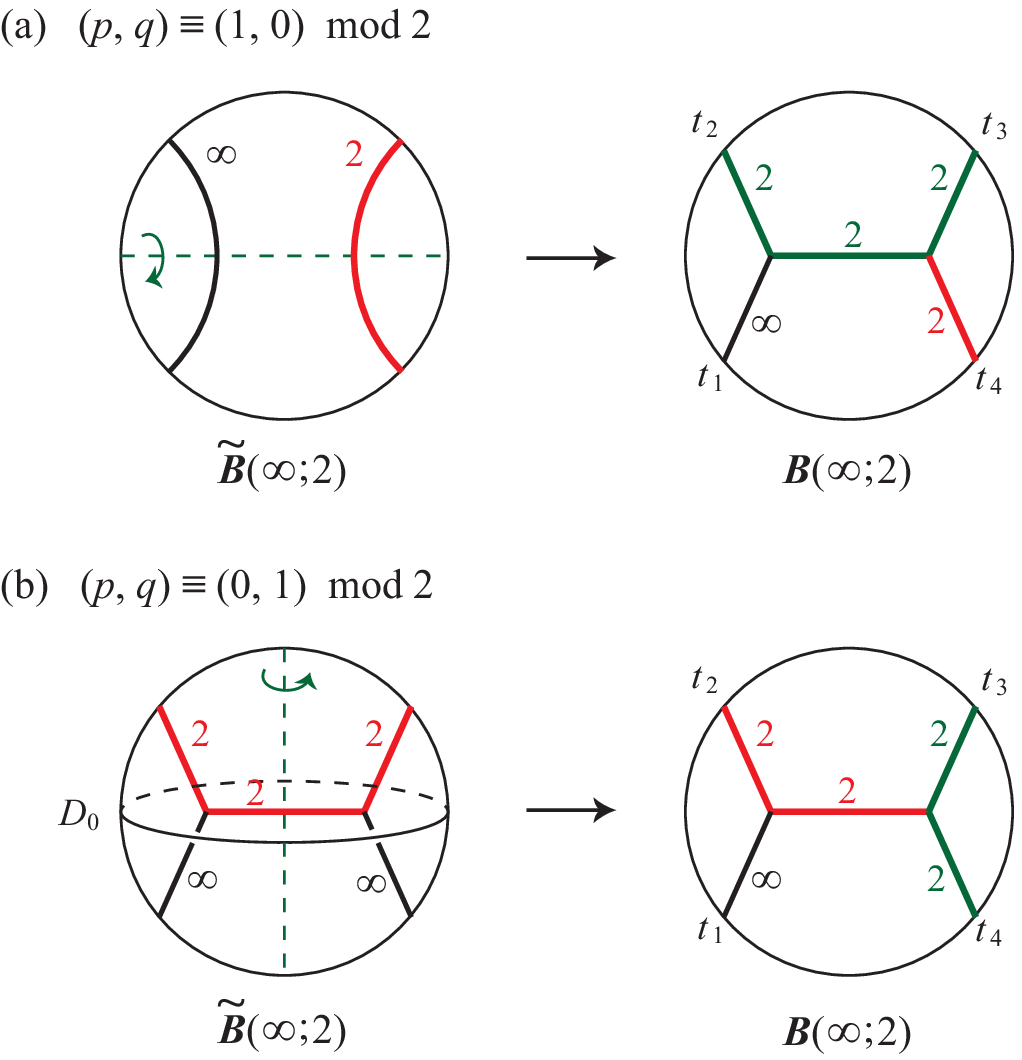}
\end{center}
\caption{\label{covering-rational-tangle}
The covering orbifold $\tilde\orbb(\infty;2)$ of $\orbb(\infty;2)$}
\end{figure}

We can easily observe the following:
\medskip

{\bf Claim.}
{\it
Under the identifications of the outer-boundaries
$\partial_{out} \tilde\orbb(\infty;2)$ and $\partial_{out} \orbb(\infty;2)$
with {\rm (}an orbifold obtained from{\rm )} $\PConway$,
as in the above and in the proof of
Proposition ~\ref{prop:quotient-Heckoid-orbifold},
the covering projection
$\partial_{out} \tilde\orbb(\infty;2) \to
\partial_{out} \orbb(\infty;2)$
maps the pair of simple loops $(\alpha_0,\alpha_{\infty})$
to $(\alpha_0,\alpha_{\infty}^2)$ or $(\alpha_0^2,\alpha_{\infty})$
according to whether $(p,q)\equiv (1,0)$ or $(0,1) \pmod2$.
}
\medskip

On the other hand,
$\tilde\orbb(r;m)$ is identified with the orbifold
$(B^3,t_-\cup\tau_-,\hat w_-)$,
where $(B^3,t_-)$ is a $2$-string trivial tangle,
$\tau_-$ is the core tunnel of $(B^3,t_-)$,
and where the weight function $\hat w_-$ is given by the following rule:
$\hat w_-(\tau_-)=m$, and $\hat w_-$ takes the value $\infty$
on one of the four edges of $t_-\cup\tau_-$
whose union is equal to $t_-$,
and the value $2$ on the remaining three edges.
The covering involution, $h_-$, of
$\tilde\orbb(r;m)\cong(B^3,t_-\cup\tau_-,\hat w_-)$
is the $\pi$-rotation
whose axis bisects $\tau_-$ (cf. Figure ~\ref{covering-rational-tangle}(b)).

By these observations concerning the suborbifolds
$\tilde\orbb(\infty;2)$ and $\tilde\orbb(r;m)$,
the odd Heckoid orbifold $\orbs(r;n)=\tilde\orbb(\infty;2)\cup\tilde\orbb(r;m)$
is regarded as the union of the orbifold $(B^3,t_-\cup\tau_-,\hat w_-)$
and the orbifold
$(B^3,t(\infty),\hat w_+)$ or $(B^3,t(\infty)\cup\tau_+,\hat w_+)$
according to whether $(p,q)\equiv (1,0)$ or $(0,1) \pmod2$.
This implies that
$\orbo(r;n)$ is constructed from some $2$-bridge link
as in the proposition.
The remaining task is to identify the slope, $\hat r$,
of the $2$-bridge link.
To this end, pick a disk $\tilde D$
properly embedded in $(B^3,t_-\cup\tau_-,\hat w_-)\cong \tilde\orbb(r;m)$
which intersects the singular set transversely
in a single point in the interior of $\tau_-$,
such that $\tilde D$ is mapped homeomorphically by the covering projection
to a disk in $\orbb(r;m)$
bounded by the loop $\alpha_r$.
Then the slope $\hat r$ of the $2$-bridge link
is equal to the slope of the simple loop $\partial\tilde D$ in
$\partial_{out} \tilde\orbb(\infty;2)$.
(Here
$\partial_{out} \tilde\orbb(\infty;2)$ is identified with
the outer boundary of
$(B^3,t(\infty),\hat w_+)$ or $(B^3,t(\infty)\cup\tau_+,\hat w_+)$;
so the slope of $\partial\tilde D$ in it is defined.)
By using the Claim in the above,
we can see that $\hat r=(q/2)/p$ or $q/(p/2)$
according as $(p,q)\equiv (1,0)$ or $(0,1) \pmod2$.
This completes the proof of the proposition
except when $(p,q)\equiv (1,1)\pmod2$.
This remaining case can be settled by using the fact that
there is a homeomorphism
from $(S^3, K(q/p))$ to $(S^3, K((p+q)/p))$
sending the upper/lower tunnels of $K(q/p)$
to those of $K((p+q)/p)$.
\end{proof}

\section{Heckoid groups as two-parabolic Kleinian groups}
\label{sec:Kleinian-Heckod-groups}

In this section, we prove Theorem ~\ref{thm.Kleinian_heckoid},
which is contained in the announcement by Agol ~\cite{Agol}.
As noted in \cite{Agol},
the proof relies on the orbifold theorem
and is analogous to the arguments in
\cite[Proof of Theorem ~9]{Jones-Reid}.

\begin{remark}
\label{rem:ASWY}
{\rm
This theorem also follows from the announcement
made in the second author's joint work with
Akiyoshi, Wada and Yamashita ~\cite[Section ~3 of Preface]{ASWY}.
Note, however, that there is an error
in the assertion 5 in Page IX in Preface,
though a special case is treated correctly in
\cite[Proposition ~5.3.9]{ASWY}.
In fact, the first sentence of the assertion
should be read as follows:
The holonomy group of $M(\theta^-,\theta^+)$ is discrete if and only if
$\theta^{\pm}\in \{2\pi/n \svert n\in \frac{1}{2}\NN_{\ge 2}\}\cup\{0\}$.
The second author would also like to note that this assertion can be proved
by using the argument of Parkkonen in \cite[Lemma ~7.5]{Parkkonen};
this was forgotten to mention in \cite{ASWY},
though the paper is included in the bibliography.
}
\end{remark}

In order to prove Theorem ~\ref{thm.Kleinian_heckoid},
we prove that $\orbo(r;m)$ with $m=2n\ge 3$
admits a hyperbolic structure.
Throughout this section,
we identify $\orbo(r;m)$ with the orbifold $(S^3,K(r)\cup\tau_+\cup\tau_-,w)$
in Proposition ~\ref{prop:quotient-Heckoid-orbifold}.
We denote by $B^3_+$ and $B^3_-$
the $3$-balls of $S^3$ bounded by the bridge sphere of $K(r)$ such that
\[
(S^3,K(r)\cup\tau_+\cup\tau_-,w)=
(B^3_+,t(\infty)\cup\tau_+,w_+)\cup (B^3_-,t(r)\cup\tau_-,w_-).
\]
We refer to \cite[Introduction and Section ~8]{Boileau-Porti}
(cf. \cite[Chapter ~2]{Boileau-Maillot-Porti},
\cite[Chapter ~2]{CHK}
and \cite[Chapter ~6]{Kapovich})
for standard terminologies for orbifolds.

\begin{lemma}
\label{lem:irreducible}
For a rational number $r$ and an integer $m\ge 3$,
the following hold.
\begin{enumerate}[\indent \rm (1)]
\item
$\orbo(r;m)$ does not contain a bad $2$-suborbifold.
\item
Any football $S^2(p,p)$ in $\orbo(r;m)$ bounds a discal $3$-suborbifold.
\item
$\orbo(r;m)$ does not contain an essential turnover.
\item
$\orbo(r;m)$ is topologically atoroidal, i.e., it
does not contain an essential orientable toric $2$-suborbifold.
\end{enumerate}
\end{lemma}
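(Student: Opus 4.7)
The plan is to use the model of $\orbo(r;m)$ from Proposition~\ref{prop:quotient-Heckoid-orbifold}: the underlying space $B$ is the $3$-ball $S^3\setminus N(J)$, where $J$ is the unique weight-$\infty$ edge, and the singular graph $\Sigma\subset B$ has two interior trivalent vertices $v,v'$ (of local types $(2,2,2)$ and $(2,2,m)$) joined by an interior weight-$2$ edge $e_0$, together with four edges from $\{v,v'\}$ to $\partial B$ of weights $2,2,2,m$ (the weight-$m$ edge being a terminal arc of $\tau_-$). The central tool is a \emph{parity count}: any closed orientable $2$-suborbifold $F$ has $|F|$ an embedded sphere in the irreducible manifold $B$, bounding a $3$-ball $B'$ on its interior side; for each edge $e$ of $\Sigma$, $|F\cap e| \pmod 2$ equals the number of endpoints of $e$ lying in $B'$. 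Since $\partial B$ never meets $B'$, a case split on $B'\cap\{v,v'\}$ yields $|F\cap\Sigma|\in 2\ZZ_{\ge 0}$ (neither vertex inside), $|F\cap\Sigma|\ge 3$ (exactly one inside), or $|F\cap\Sigma|\ge 4$ (both inside).

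Parts (1)--(3) follow directly. The orientable bad $2$-orbifolds are $S^2(p)$ and $S^2(p,q)$ with $p\ne q$. The teardrop requires $|F\cap\Sigma|=1$, impossible. The bad sphere $S^2(p,q)$ requires $|F\cap\Sigma|=2$ of mixed weight, forcing one hit on the sole weight-$m$ edge $\tau_-$; by parity (no vertex inside, $\tau_-$ must be crossed evenly) this is impossible, giving (1). A football $S^2(p,p)$ has $|F\cap\Sigma|=2$, so no vertex is inside and every edge is crossed evenly, whence both hits lie on a single edge $e$ of weight $p$ and $B'\cap\Sigma$ is just one subarc of $e$; this identifies $(B',B'\cap\Sigma)$ with the discal $B^3(p)$, giving (2). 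A turnover $S^2(p_1,p_2,p_3)$ has $|F\cap\Sigma|=3$, so exactly one vertex $w$ lies inside with each incident edge crossed once, making $F$ equal to $S^2(2,2,2)$ or $S^2(2,2,m)$; both have $\sum 1/p_i>1$, so they are spherical and not turnovers, giving (3).

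For part (4), the orientable Euclidean $2$-orbifolds are $T^2$, $S^2(2,2,2,2)$, $S^2(2,3,6)$, $S^2(2,4,4)$, and $S^2(3,3,3)$. The $3$-intersection analysis above shows $S^2(2,3,6)$, $S^2(2,4,4)$, $S^2(3,3,3)$ cannot embed at all. The parity count also forces any $S^2(2,2,2,2)$ to have no vertex inside (otherwise $\tau_-$ would contribute a weight-$m$ cone point), so the four hits must lie on weight-$2$ edges only. What remains is to show that neither such a pillow nor a torus $T^2\subset B\setminus\Sigma$ is essential. For this I plan to exploit the bridge disk $\check\OO\cong D^2(2,2,2)$ properly embedded in $\orbo(r;m)$ from the decomposition $\orbo(r;m)=\orbb(\infty;2)\cup\orbb(r;m)$: after verifying that $\check\OO$ is incompressible and $\partial$-incompressible in each piece (using the product structure $\check\OO\times I$ underlying each $\orbb$-piece before attachment of the $2$- and $3$-handle orbifolds), I isotope a putative essential $F$ to minimise $|F\cap\check\OO|$, run an innermost-disk/innermost-arc argument on $\check\OO$ to push $F$ entirely into a single $\orbb$-piece, and then rule out essential tori and pillows there by direct inspection of the product-plus-handle structure of each piece. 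The delicate step, and main technical obstacle of the lemma, is establishing the $\check\OO$-incompressibility.
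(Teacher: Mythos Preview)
Your parity framework is clean and correctly constrains the cone types in (1), (3) and (4), at least when $K(r)$ is a knot. (For two-component links the graph $\Sigma\subset B$ has a different combinatorial type --- the two surviving interior vertices are joined by \emph{two} interior weight-$2$ edges, and there is an extra weight-$2$ arc with both endpoints on $\partial B$ --- so the ``both vertices inside'' case gives only $|F\cap\Sigma|\ge 2$ and parity alone no longer excludes a bad $S^2(2,m)$; the paper instead observes that any such sphere separates the endpoints of a tunnel while missing $K(r)$, forcing $K(r)$ to be split.) The more serious gap is in (2) and (3). Having shown that $B'\cap\Sigma$ is a single subarc of one edge (for a football) or a tripod based at one vertex (for an $S^2(p,q,r)$), you conclude at once that $(B',B'\cap\Sigma)$ is discal. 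But a properly embedded arc or tripod in a $3$-ball need not be standard, and this is exactly what has to be verified. Note also that in the paper's usage a turnover is any $S^2(p,q,r)$, spherical ones included, and for a spherical $2$-suborbifold ``essential'' means it fails to bound a discal $3$-orbifold; the stronger conclusion that every $S^2(2,2,2)$ and $S^2(2,2,m)$ bounds a discal $3$-orbifold is what Lemma~\ref{lem:Haken} actually uses to establish irreducibility, so your ``spherical, hence not a turnover'' does not close the case.

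The paper supplies the missing unknottedness by genuinely different arguments. For a football meeting $K(r)$ it runs a free-product-with-amalgamation rank computation in $\pi_1\big(S^3-(K(r)\cup\tau_+\cup\tau_-)\big)$ to force the relevant $1$-string tangle to be trivial; for a football meeting a tunnel it uses that each tunnel lies on a trivial constituent knot of the spatial graph. For turnovers and pillows the paper first isotopes $|F|$ (which it has shown avoids $\tau_-$) into the upper tangle piece $(B^3_+,t(\infty)\cup\tau_+)$, and then studies $|F|\cap D_h$ where $D_h\subset B^3_+$ is a fixed planar disk \emph{containing} the graph $t(\infty)\cup\tau_+$; an innermost/outermost argument on $D_h$ produces the compressing or parallelism disk directly. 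This $D_h$-device is considerably more direct than your proposed $\check\OO$-incompressibility programme. Finally, essential tori are dispatched in one line: $S^3-(K(r)\cup\tau_+\cup\tau_-)$ is a genus-$3$ open handlebody, so every embedded torus in it is compressible.
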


\begin{proof}
{\rm
(1) Suppose that
$\orbo(r;m)$ contains a bad $2$-suborbifold, $F$.
Then $F$ is either
a {\it teardrop} $S^2(p)$
or a {\it spindle} $S^2(p,q)$ with $1<p<q$.
Since the indices of the singular set of $\orbo(r;m)$ are $2$ and $m(\ge 3)$,
and since the underlying $2$-sphere $|F|$ intersects $K(r)$
in an even number of points,
we see that
$|F|$ is disjoint from $K(r)$ and intersects (at least)
one of the unknotting tunnels $\tau_{\pm}$ transversely in a single point,
where $F\cong S^2(2)$, $S^2(m)$ or $S^2(2,m)$.
Since the endpoints of each of the unknotting tunnels are contained in $K(r)$,
this implies that $K(r)$ is a split link, a contradiction.
Hence $\orbo(r;m)$ cannot contain a bad $2$-suborbifold.

(2) Let $F$ be a suborbifold of $\orbo(r;m)$ which is a football.
As in (1), we see that one of the following holds.
\begin{enumerate}[\indent \rm (i)]
\item
$|F|$ intersects $K(r)$ in two points, where $F\cong S^2(2,2)$.
\item
$|F|$ is disjoint from $K(r)$ and intersects
one of the unknotting tunnels $\tau_{\pm}$ in two points
and does not intersect the other unknotting tunnel,
where $F\cong S^2(2,2)$ or $S^2(m,m)$.
\end{enumerate}
Suppose that condition (i) holds.
Then $|F|$ is disjoint from $\tau_+\cup\tau_-$,
and so either $\tau_+$ and $\tau_-$ are separated by $|F|$,
or $\tau_+\cup\tau_-$ is contained in a single component of $S^3-|F|$.
If $\tau_+$ and $\tau_-$ are separated by $|F|$,
then $|F|$ must intersect $K(r)$ in at least four points, a contradiction.
Hence $\tau_+\cup\tau_-$ is contained in a single component of $S^3-|F|$.
Let $B^3_1$ and $B^3_2$ be the $3$-balls in $S^3$ bounded
by $|F|$, such that $\tau_+\cup\tau_-\subset B^3_2$.
Set $K_i=B^3_i\cap K(r)$ ($i=1,2$).
Then the genus $3$ open handle body
$S^3-(K(r)\cup\tau^+\cup\tau^-)$
is the union of $B^3_1-K_1$ and $B^3_2-(K_2\cup\tau^+\cup\tau^-)$
along the open annuls $|F|-K(r)$,
and hence the rank $3$ free group,
$\pi_1(S^3-(K(r)\cup\tau^+\cup\tau^-))$, is the free product
of $\pi_1(B^3_1-K_1)$ and $\pi_1(B^3_2-(K_2\cup\tau^+\cup\tau^-))$
with the infinite cyclic amalgamated subgroup $\pi_1(|F|-K(r))$.
Since $H_1(B^3_1-K_1)\cong\ZZ$,
this implies $\pi_1(B^3_1-K_1)\cong \ZZ$.
Hence $(B^3_1,K_1)$ is a trivial $1$-string tangle.
Thus
$(B^3_1,B^3_1\cap (K(r)\cup\tau_+\cup\tau_-))=(B^3_F,B^3_1\cap K(r))$
determines a discal $3$-suborbifold of $\orbo(r;m)$ bounded by $F$,
and therefore $F$ is inessential.

Suppose that condition (ii) holds.
For simplicity, we assume that $|F|$ intersects $\tau_+$ in two points
and does not intersect $\tau_-$.
(The other case is treated similarly.)
Let $B^3_F$ be the $3$-ball bounded by $|F|$ such that
$B^3_F\cap\tau_+$ is a subarc of $\tau_+$.
Then $(B^3_F,B^3_F\cap (K(r)\cup\tau_+\cup\tau_-))=(B^3_F,B^3_F\cap \tau_+)$
is a trivial $1$-string tangle,
because $\tau_+$ is contained in
a trivial constituent knot in the spatial graph $K(r)\cup \tau_+\cup\tau_-$.
Hence it determines a discal $3$-suborbifold of $\orbo(r;m)$ bounded by $F$.

(3) Suppose that $\orbo(r;m)$ contains an essential turnover
$F\cong S^2(p,q,r)$.
Then either $|F|$ is disjoint from $K(r)$, or
$|F|$ intersects $K(r)$ in two points.
In the first case, $|F|$ intersects $\tau_+\cup \tau_-$ in three points
and hence $|F|$ intersects $\tau_+$ or $\tau_-$
in an odd number of points.
As in (1), it follows that $K(r)$ is a split link, a contradiction.
Hence we may assume that $|F|$ intersects $K(r)$ in two points,
and therefore $|F|$ is disjoint from $\tau_+$ or $\tau_-$.
For simplicity, we assume that $|F|$ is disjoint from $\tau_-$.
(The other case is treated similarly.)
By using the fact that
$|F|$ is also disjoint from $\partial \orbo(r;m)$
and the fact that
$(B^3_-,t(r)\cup \tau_-)$ is a relative regular neighborhood of
$\tau_-$ in $(S^3,K(r)\cup\tau_+\cup\tau_-)$,
we can see that $F$ is isotopic to a $2$-suborbifold
which is disjoint from the suborbifold $(B^3_-,t(r)\cup \tau_-,w_-)$.
Hence we may assume that
$F$ is contained in the interior of
the suborbifold $(B^3_+, t(\infty)\cup\tau_+,w_+)$.
Let $t_i$ ($1\le i\le 4$) be the edges of $t(\infty)\cup\tau_+$
as illustrated in the right figures in Figure ~\ref{covering-rational-tangle}.
Note that $t(\infty)=\cup_{i=1}^4 t_i$, $w_+(t_1)=\infty$ and $w_+(t_i)=2$ ($2\le i\le 4$).
Thus $|F|$ is disjoint from $t_1$ and $|F|$ intersects $(t(\infty)-t_1)\cup \tau_+$
transversely in three points.
Let $D_h$ be the disk properly embedded in $B^3_+$
determined by the plane in which Figure ~\ref{covering-rational-tangle} is drawn.
Then $D_h$ contains the graph $t(\infty)\cup \tau_+$.
We may assume that $|F|$ is transversal to $D_h$ and hence
$|F|\cap D_h$ consists of mutually disjoint circles.
By using the irreducibility of $B^3_+-(t(\infty)\cup \tau_+)$,
we may assume, by a standard argument, that
no component of $|F|\cap D_h$ bounds a disk
disjoint from $t(\infty)\cup \tau_+$.
Then it follows that
$|F|\cap D_h$ must consist of a single circle
which intersects $\tau_+$, $t_3$ and $t_4$ in a single point.
Let $D_F$ be the disk in $D_h$ bounded by the circle $D_h\cap |F|$,
and let $B_F^3$ be the $3$-ball in $B_+^3$ bounded by $|F|$.
Then $D_F$ is properly embedded in $B_F^3$,
and $B_F^3\cap (t(\infty)\cup\tau_+)=D_F\cap (t(\infty)\cup\tau_+)$.
Hence $(B_F^3,B_F^3\cap (t(\infty)\cup\tau_+))$
determines a discal $3$-orbifold
bounded by the turnover $F$,
a contradiction.

(4)
Suppose that $\orbo(r;m)$ contains an essential pillow $F\cong S^2(2,2,2,2)$.
Then $|F|$ is disjoint from $\tau_-$,
which has index $m\ge 3$, and hence
we may assume, as in (3), that $|F|$ is contained in
the suborbifold $(B^3_+, t(\infty)\cup\tau_+,w_+)$.
Under the notation in (3),
$|F|$ is disjoint from $t_1$ and $|F|$ intersects $t(\infty)\cup \tau_+$
transversely in four points.
We may also assume that $|F|$ is transversal to the disk $D_h$
introduced in (3) and hence
$|F|\cap D_h$ consists of mutually disjoint circles.
By using the irreducibility of $B^3_+-(t(\infty)\cup \tau_+)$
and the assumption that $F$ is essential,
we may assume that
no component of $|F|\cap D_h$ bounds a disk disjoint from
$t(\infty)\cup \tau_+$.
Hence we see that
either
(i) $|F|\cap D_h$ consists of a single loop
which intersects $(t(\infty)-t_1)\cup\tau_+$ in four points,
or (ii)
$|F|\cap D_h$ consists of two loops
each of which intersects $(t(\infty)-t_1)\cup\tau_+$ in two points.
In either case, we can find an ``outermost disk'' $\delta$ in $D_h$
satisfying the following conditions.
\begin{enumerate}[\indent \rm (a)]
\item
$\delta\cap |F|$ is an arc, $c$, in $\partial \delta$.
\item
$\delta\cap(t(\infty)\cup\tau_+)$
is an arc, $c'$, in $\partial \delta$ which is contained in the interior
of an edge of the graph $t(\infty)\cup\tau_+$.
\item
$\partial\delta=c\cup c'$.
\item
$\delta$ is contained in the $3$-ball, $B_F^3$, in $B^3_+$
which is bounded by $|F|$.
\end{enumerate}
Then the frontier of a regular neighborhood of
$\delta$ in $B_F^3$ is a disk properly embedded in $B_F^3$
disjoint from the singular set,
whose boundary is an essential loop in the pillow $F$.
This contradicts the assumption that $F$ is essential.
Hence $\orbo(r;m)$ does not contain an essential pillow.

Assume that $\orbo(r;m)$ contains an essential torus, $F$.
Then $F$ is a torus contained in $S^3-(K(r)\cup\tau_+\cup\tau_-)$,
which is a genus $3$ open handlebody.
Hence $F$ must be compressible in $S^3-(K(r)\cup\tau_+\cup\tau_-)$,
a contradiction.

By the classification of toric $2$-orbifolds,
an orientable toric $2$-orbifold is a torus, a turnover or a pillow.
Hence by (3) and the above arguments,
$\orbo(r;m)$ does not contain an essential orientable toric $2$-orbifold.
}
\end{proof}

\begin{lemma}
\label{lem:Haken}
For a rational number $r$ and an integer $m\ge 3$,
the orbifold $\orbo(r;m)$ is Haken,
i.e., it is irreducible and does not contain an essential turnover,
but contains an essential $2$-suborbifold.
\end{lemma}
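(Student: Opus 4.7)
My plan is to deduce irreducibility and the absence of essential turnovers directly from Lemma~\ref{lem:irreducible}, and then to exhibit an explicit essential 2-suborbifold furnished by the construction of $\orbo(r;m)$ as a union of two pieces. The absence of essential turnovers is precisely Lemma~\ref{lem:irreducible}(3), so the real content is irreducibility together with the existence of an essential 2-suborbifold.

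For irreducibility I need to verify that $\orbo(r;m)$ contains no bad 2-suborbifold and that every spherical 2-suborbifold bounds a discal 3-suborbifold. The first is Lemma~\ref{lem:irreducible}(1). For the second, after discarding the bad ones, the remaining spherical 2-orbifolds fall into three topological types: bare spheres $S^2$, footballs $S^2(p,p)$, and turnovers $S^2(p,q,r)$. Footballs are handled by Lemma~\ref{lem:irreducible}(2). For a turnover, the proof of Lemma~\ref{lem:irreducible}(3) in fact constructs a discal 3-suborbifold bounded by the given turnover---this is exactly what produces the contradiction to essentiality. For a bare 2-sphere $F$ disjoint from the singular set, I pass to the topological model of Proposition~\ref{prop:quotient-Heckoid-orbifold} so that $|\orbo(r;m)|=S^3-N(J)\cong B^3$; the residual singular graph $\Sigma':=(K(r)\cup\tau_+\cup\tau_-)-J$ is connected, since $J$ is a non-separating edge of the trivalent graph $K(r)\cup\tau_+\cup\tau_-$ of first Betti number $3$. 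Consequently all of $\Sigma'$ lies on one side of $F$, and the other side is a 3-ball disjoint from the singular set, furnishing the required discal 3-suborbifold.

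For the existence of an essential 2-suborbifold, the natural candidate is the common outer boundary $\check\OO\cong D^2(2,2,2)$ of the two pieces $\orbb(\infty;2)$ and $\orbb(r;m)$, regarded as a properly embedded 2-suborbifold of $\orbo(r;m)$; via the identifications in Proposition~\ref{prop:quotient-Heckoid-orbifold} it corresponds to the bridge sphere of $K(r)$ equipped with its orbifold structure. Since $\chi(\check\OO)=-1/2<0$, it is hyperbolic and in particular cannot bound a discal 3-suborbifold. Non-boundary-parallelism follows from the observation that $\partial\orbo(r;m)\cong S^2(2,2,2,m)$ decomposes along a copy of $\partial\check\OO$ as the union $D^2(2,2)\cup D^2(2,m)$ of residual boundaries of the two pieces, so any subsurface of $\partial\orbo(r;m)$ homeomorphic to $D^2(2,2,2)$ must straddle both discs, whereas a parallelism would confine $\check\OO\times I$ to a single piece and force that piece to be a product, contradicting its 2-handle/3-handle structure. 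The main obstacle is incompressibility, which I would attack by a standard innermost-disc argument reducing any hypothetical compressing disc to lie in one of the two pieces, and then verify within $\orbb(r;m)$ that no essential simple loop of $\check\OO$---necessarily of the form $\beta_s$---bounds an honest (non-singular) disc. For $s=r$ the loop bounds only the discal orbifold $D^2(m)$ of the attached 2-handle with a genuine cone point of order $m\ge 3$ rather than a disc, while for $s\neq r$ the loop represents a non-trivial element of $\pi_1(\orbb(r;m))\cong\pi_1(\OO)/\llangle\beta_r^m\rrangle$ (the imposed relation $\beta_r^m=1$ produces only new torsion at $\beta_r$); the identical argument with $(r,m)$ replaced by $(\infty,2)$ handles $\orbb(\infty;2)$.
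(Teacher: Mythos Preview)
Your treatment of irreducibility and the absence of essential turnovers is fine and close to the paper's, though the paper handles the bare $2$-sphere case more directly by observing that the complement of an open regular neighbourhood of the singular set of $\orbo(r;m)$ is a genus~$3$ handlebody, in which every $2$-sphere bounds a ball.

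The genuine gap is in your argument for the existence of an essential $2$-suborbifold. Your candidate $\check\OO\cong D^2(2,2,2)$ is in fact \emph{compressible} in $\orbo(r;m)$. The error is in the phrase ``bounds an honest (non-singular) disc'': in the orbifold setting a compressing disc is allowed to be any discal $2$-orbifold $D^2(k)$, not just a non-singular disc $D^2$. With this correct definition, the essential loop $\beta_r\subset\check\OO$ bounds the discal $2$-suborbifold $F\cong D^2(m)$ in $\orbb(r;m)$ described in Lemma~\ref{lem:orbimap1}(1); this is precisely the core of the attached $2$-handle orbifold. Likewise $\beta_\infty$ bounds a $D^2(2)$ on the $\orbb(\infty;2)$ side. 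So $\check\OO$ is compressible on both sides, and your incompressibility argument, which only rules out non-singular compressing discs, does not suffice.

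The paper sidesteps this difficulty entirely: rather than exhibiting an explicit essential $2$-suborbifold, it observes that $\partial\orbo(r;m)\cong S^2(2,2,2,m)$ is not a turnover and then invokes \cite[Proposition~4.6]{Boileau-Maillot-Porti}, a general structural result guaranteeing that a compact irreducible orientable $3$-orbifold with non-empty boundary not consisting of turnovers is Haken. If you want to salvage a direct approach, you would need a different candidate surface (for instance, one obtained by compressing $\check\OO$ along $D^2(m)$ and $D^2(2)$ and examining the resulting pieces), but this requires additional work that the black-box citation avoids.
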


\begin{proof}
{\rm
By Lemma ~\ref{lem:irreducible}(1), $\orbo(r;m)$
does not contain a bad $2$-suborbifold.
By Lemma ~\ref{lem:irreducible}(2) and (3),
every orientable spherical $2$-suborbifold of $\orbo(r;m)$
with nonempty singular set
bounds a discal $3$-suborbifold.
Moreover any $2$-sphere (i.e., spherical $2$-suborbifold
with empty singular set) of $\orbo(r;m)$
bounds a $3$-ball in $\orbo(r;m)$,
because Proposition ~\ref{prop:quotient-Heckoid-orbifold} implies that
the complement of an open regular neighborhood of the singular set of $\orbo(r;m)$
is homeomorphic to a genus $3$ handlebody.
Hence the orbifold $\orbo(r;m)$ is irreducible.
Moreover, it does not contain an essential turnover
by Lemma ~\ref{lem:irreducible}(3).
Since $\partial \orbo(r;m)\cong S^2(2,2,2,m)$ is not a turnover,
we see by \cite[Proposition ~4.6]{Boileau-Maillot-Porti}
that $\orbo(r;m)$ is Haken
(see \cite[Definition ~8.0.1]{Boileau-Porti}).
}
\end{proof}

\begin{lemma}
\label{lem:hyperbolic}
For a rational number $r$ and an integer $m\ge 3$,
the orbifold $\orbo(r;m)$ is homotopically atoroidal, i.e.,
$\pi_1(\orbo(r;m))$ is not virtually abelian and
every rank $2$ free abelian subgroup of $\pi_1(\orbo(r;m))$ is peripheral.
\end{lemma}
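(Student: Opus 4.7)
The plan is to verify separately the two conditions in the definition of homotopical atoroidality — that $\pi_1(\orbo(r;m))$ is not virtually abelian, and that every rank-$2$ free abelian subgroup of $\pi_1(\orbo(r;m))$ is peripheral — deducing both from the topological information already assembled in Lemmas~\ref{lem:irreducible} and~\ref{lem:Haken}, without appealing to any hyperbolization statement.

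For non-virtual-abelianness, I plan to exploit the single boundary component $\partial \orbo(r;m) \cong S^2(2,2,2,m)$. Its orbifold Euler characteristic is $\chi = -\tfrac{1}{2} + \tfrac{1}{m}$, which is strictly negative for $m \ge 3$, so $\pi_1(\partial \orbo(r;m))$ is a cocompact Fuchsian group of hyperbolic type, and in particular contains a non-abelian free subgroup. Lemma~\ref{lem:Haken} says that $\orbo(r;m)$ is Haken in the sense of \cite[Definition~8.0.1]{Boileau-Porti}, which includes $\partial$-incompressibility; the orbifold analog of the Loop Theorem then gives an injection $\pi_1(\partial \orbo(r;m)) \hookrightarrow \pi_1(\orbo(r;m))$, so $\pi_1(\orbo(r;m))$ contains a non-virtually-abelian subgroup and is itself not virtually abelian.

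For the condition on rank-$2$ abelian subgroups, I plan to establish the stronger statement that $\pi_1(\orbo(r;m))$ contains no $\ZZ \oplus \ZZ$ subgroup at all, whereupon the peripheral condition is satisfied vacuously. The main tool is the orbifold torus theorem (see \cite{Boileau-Maillot-Porti}): for a compact, orientable, irreducible $3$-orbifold without bad $2$-suborbifolds, a non-peripheral $\ZZ \oplus \ZZ$ subgroup of $\pi_1$ forces either an essential closed Euclidean $2$-suborbifold or a Seifert fibration. Every orientable closed Euclidean $2$-orbifold is a torus, a Euclidean turnover ($S^2(2,3,6)$, $S^2(2,4,4)$, or $S^2(3,3,3)$), or a pillow $S^2(2,2,2,2)$; these are all ruled out as essential suborbifolds by Lemma~\ref{lem:irreducible}(3) and~(4). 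To exclude the Seifert-fibered alternative, I observe that the boundary of a Seifert fibered $3$-orbifold consists of Euclidean $2$-orbifolds, whereas $\partial \orbo(r;m) = S^2(2,2,2,m)$ has strictly negative Euler characteristic for $m \ge 3$. Therefore $\pi_1(\orbo(r;m))$ admits no non-peripheral $\ZZ^2$ subgroup. A peripheral $\ZZ^2$ would embed into $\pi_1(\partial\orbo(r;m))$, a hyperbolic Fuchsian group that contains no $\ZZ^2$; consequently $\pi_1(\orbo(r;m))$ contains no $\ZZ \oplus \ZZ$ subgroup whatsoever.

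The main technical obstacle will be the clean invocation of the orbifold torus theorem, which requires careful bookkeeping to rule out each type of essential Euclidean $2$-suborbifold and to exclude Seifert fibration; once those hypotheses are verified from Lemma~\ref{lem:irreducible} and the boundary Euler characteristic calculation, the remainder of the argument is routine.
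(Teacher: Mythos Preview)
Your argument has a genuine gap: you assert that Lemma~\ref{lem:Haken} (Haken in the sense of \cite[Definition~8.0.1]{Boileau-Porti}) includes $\partial$-incompressibility, and then use this to inject $\pi_1(\partial\orbo(r;m))$ into $\pi_1(\orbo(r;m))$. But the paper's own restatement of Haken is ``irreducible, no essential turnover, contains an essential $2$-suborbifold''---no mention of boundary incompressibility---and more tellingly, the later Lemma~\ref{lem:pared_orbifold}(3) explicitly treats the case where $j_*:\pi_1(\partial\orbo(r;m))\to\pi_1(\orbo(r;m))$ is \emph{not} injective as a live possibility requiring its own analysis. So at the stage of Lemma~\ref{lem:hyperbolic} you cannot assume the boundary is incompressible. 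This undercuts both halves of your argument: without injectivity you cannot conclude $\pi_1(\orbo(r;m))$ inherits a nonabelian free subgroup from the boundary, and your exclusion of a peripheral $\ZZ^2$ also fails, since ``peripheral'' means conjugate into the \emph{image} $j_*(\pi_1(\partial\orbo(r;m)))$, which as a proper quotient of a Fuchsian group could in principle contain subgroups the Fuchsian group itself does not.

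The paper sidesteps this entirely. It first invokes Takeuchi's theorem to get that $\orbo(r;m)$ is good, then argues by contradiction: if $\orbo(r;m)$ were not homotopically atoroidal, then topological atoroidality (Lemma~\ref{lem:irreducible}(4)) plus goodness plus \cite[Proposition~8.2.11]{Boileau-Porti} force $\orbo(r;m)$ to be Euclidean or Seifert fibered, contradicting $\partial\orbo(r;m)\cong S^2(2,2,2,m)$ being non-Euclidean. This single black-box invocation handles both the virtually-abelian case and the non-peripheral $\ZZ^2$ case at once. Your $\ZZ^2$ argument via the orbifold torus theorem is close in spirit to what Proposition~8.2.11 packages (and you would also need goodness to apply it cleanly), but your treatment of the virtually-abelian condition and of peripheral $\ZZ^2$'s needs a different idea than boundary injectivity.
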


\begin{proof}
{\rm
Since $\orbo(r;m)$ is Haken by Lemma ~\ref{lem:Haken},
we see by \cite[Theorem ~A]{Takeuchi}
(cf. \cite[Proposition ~8.2.2]{Boileau-Porti})
that $\orbo(r;m)$ is good,
i.e., it has a manifold cover.
Suppose on the contrary that
$\orbo(r;m)$ is not homotopically atoroidal
(see \cite[Definition ~8.2.13]{Boileau-Porti}).
Then, since $\orbo(r;m)$ is topologically atoroidal
by Lemma ~\ref{lem:irreducible}(4),
we see by
\cite[Proposition ~8.2.11]{Boileau-Porti} that
$\orbo(r;m)$ is either Euclidean or Seifert fibered.
(Here, we use the fact that $\orbo(r;m)$ is good.)
This contradicts the fact that
$\partial \orbo(r;m)\cong S^2(2,2,2,m)$ is not Euclidian.
Hence the orbifold $\orbo(r;m)$ is homotopically atoroidal.
}
\end{proof}

\begin{corollary}
\label{cor:hyperbolic}
For a rational number $r$ and an integer $m\ge 3$,
the interior of $\orbo(r;m)$ has a geometrically finite hyperbolic structure.
In particular, $\orbo(r;m)$ is very good,
i.e., it has a finite cover which is a manifold.
\end{corollary}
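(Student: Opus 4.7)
The plan is to invoke the orbifold hyperbolization theorem for Haken $3$-orbifolds (see, e.g., \cite[Theorem~8.3.1]{Boileau-Porti}) to produce the hyperbolic structure, and then to deduce the ``very good'' conclusion from Selberg's lemma applied to the holonomy representation.

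First, I would verify the hypotheses of the orbifold hyperbolization theorem for $\orbo(r;m)$. The orbifold is compact and orientable by the explicit description in Proposition~\ref{prop:quotient-Heckoid-orbifold}. It is Haken by Lemma~\ref{lem:Haken}, topologically atoroidal by Lemma~\ref{lem:irreducible}(4), and homotopically atoroidal by Lemma~\ref{lem:hyperbolic}. The boundary $\partial \orbo(r;m)\cong S^2(2,2,2,m)$ has orbifold Euler characteristic $-\frac{1}{2}+\frac{1}{m}$, which is strictly negative when $m\ge 3$, so it is a hyperbolic $2$-orbifold. Thurston's theorem in this setting then endows the interior of $\orbo(r;m)$ with a complete geometrically finite hyperbolic structure whose convex core is $\orbo(r;m)$ itself, with totally geodesic boundary along $\partial \orbo(r;m)$.

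For the ``very good'' statement, I would use the fact that the holonomy of this hyperbolic structure realizes $\pi_1(\orbo(r;m))$ as a finitely generated discrete subgroup of $\PSL(2,\CC)$. By Selberg's lemma, it admits a torsion-free normal subgroup $\Gamma$ of finite index. The covering orbifold of $\orbo(r;m)$ corresponding to $\Gamma$ then has trivial local group at every point, hence is a genuine manifold, and the covering is finite by construction.

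The only non-trivial step is the invocation of the orbifold hyperbolization theorem itself; all the required topological input has already been verified in the preceding lemmas, and the passage from discrete faithful holonomy to a manifold cover is standard. The main subtlety to monitor is that the version of the theorem applied must be the one for Haken orbifolds with \emph{hyperbolic} $2$-orbifold boundary (rather than Euclidean), so that the resulting hyperbolic structure is geometrically finite with funnels along $\partial \orbo(r;m)$, rather than a finite-volume cusped structure.
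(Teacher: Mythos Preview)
Your proposal is correct and follows essentially the same route as the paper: both invoke the orbifold hyperbolization theorem for Haken orbifolds (the paper cites \cite[Theorem~8.2.14]{Boileau-Porti}), using Lemmas~\ref{lem:Haken} and~\ref{lem:hyperbolic} to verify the hypotheses, and then apply Selberg's lemma to the holonomy to obtain the finite manifold cover. Your additional remarks---the explicit Euler-characteristic computation for $\partial\orbo(r;m)$ and the description of the resulting structure as having totally geodesic boundary---are correct elaborations but not needed for the corollary as stated.
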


\begin{proof}
By Lemmas \ref{lem:Haken} and \ref{lem:hyperbolic},
$\orbo(r;m)$ is a homotopically atoroidal Haken $3$-orbifold.
Hence, by the orbifold theorem for Haken orbifolds
\cite[Theorem ~8.2.14]{Boileau-Porti},
$\orbo(r;m)$ is hyperbolic.
Moreover, it follows from the proof of the theorem that the hyperbolic structure can
be chosen to be geometrically finite.
The last assertion follows from Selberg's Lemma
\cite{Selberg}
(cf. \cite[Theorem ~2.29]{Matsuzaki-Taniguchi}).
\end{proof}

Let $P=\mathrm{cl}(\partial \orbb(\infty;2)-\partial_{out} \orbb(\infty;2))$.
Then $P\cong D^2(2,2)$ is an annular $2$-suborbifold
in $\partial \orbo(r;m)$,
and the following lemma shows that
$(\orbo(r;m),P)$ is a pared $3$-orbifold
(see \cite[Definition ~8.3.7]{Boileau-Porti}).

\begin{lemma}
\label{lem:pared_orbifold}
For a rational number $r$ and an integer $m\ge 3$,
the pair $(\orbo(r;m),P)$ satisfies the following conditions,
and hence it is a pared $3$-orbifold.
\begin{enumerate}[\indent \rm (1)]
\item
$\orbo(r;m)$ is irreducible and very good.
\item
$P$ is incompressible.
\item
Every rank $2$ free abelian subgroup of $\pi_1(\orbo(r;m))$ is
conjugate to a subgroup of $\pi_1(P)$.
(In fact, $\pi_1(\orbo(r;m))$ does not contain a rank $2$ free abelian subgroup.)
\item
Any properly embedded annular $2$-suborbifold
$(A,\partial A)\subset (\orbo(r;m),P)$
whose boundary rests on essential loops in $P$
is parallel to $P$.
\end{enumerate}
\end{lemma}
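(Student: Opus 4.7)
The plan is to verify each of the four defining conditions of a pared $3$-orbifold in turn, relying on the results already established in this section.

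Conditions (1) and (3) are essentially immediate. Irreducibility of $\orbo(r;m)$ is the content of Lemma \ref{lem:Haken}, very-goodness is the concluding assertion of Corollary \ref{cor:hyperbolic}, and this settles (1). For (3), Lemma \ref{lem:hyperbolic} forces every rank $2$ free abelian subgroup of $\pi_1(\orbo(r;m))$ to be peripheral, hence conjugate into $\pi_1(\partial\orbo(r;m))\cong \pi_1(S^2(2,2,2,m))$. Since $\chi^{\mathrm{orb}}(S^2(2,2,2,m))=-\tfrac{1}{2}+\tfrac{1}{m}<0$ for $m\ge 3$, the latter is a hyperbolic Fuchsian group and contains no rank $2$ free abelian subgroup. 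Hence $\pi_1(\orbo(r;m))$ itself has no such subgroup, and (3) together with its parenthetical strengthening holds vacuously.

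For (2), the plan is to exploit the geometrically finite hyperbolic structure on $\interior \orbo(r;m)$ furnished by Corollary \ref{cor:hyperbolic}. Because $P\cong D^2(2,2)$ is a Euclidean $2$-orbifold lying on the boundary, the classification of cusp cross-sections of geometrically finite hyperbolic $3$-orbifolds forces the ends of $\interior \orbo(r;m)$ adjacent to $P$ to be cusps with cross-section $P$, so that $\pi_1(P)\cong \ZZ/2\ast \ZZ/2$ injects as a maximal (infinite dihedral) parabolic subgroup of the Kleinian group $\pi_1(\orbo(r;m))$. Consequently, no essential simple loop in $P$ bounds a discal $2$-suborbifold of $\orbo(r;m)$, which is precisely the orbifold incompressibility of $P$.

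Condition (4) is the main obstacle. Given $(A,\partial A)\subset(\orbo(r;m),P)$ as in the statement, I will first observe that, up to isotopy in $P$, every essential simple loop in $P\cong D^2(2,2)$ is parallel to the outer boundary $\partial P$; thus the components of $\partial A$ cobound annular regions inside $P$. Attaching these annular regions to $A$ along $\partial A$ produces a closed toric $2$-suborbifold $\hat A$ of $\orbo(r;m)$ (passing if necessary to an orientation double cover to arrange orientability). By Lemma \ref{lem:irreducible}(4), $\hat A$ is inessential, so it either compresses or is parallel to $\partial\orbo(r;m)$. A compressing disk would have its boundary lying either on the $A$-part of $\hat A$, contradicting the essentiality of $\partial A$ in $P$, or on the $P$-part, contradicting the incompressibility of $P$ just established in (2). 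Hence $\hat A$ must be parallel to $\partial\orbo(r;m)$, and restricting the resulting product region between $\hat A$ and $\partial\orbo(r;m)$ to a neighborhood of $A$ yields the desired parallelism $A\sim P$. The most delicate point to make fully rigorous will be this last restriction: one must verify that the product structure witnessing parallelism of the closed orbifolds matches $A$ with $P$ rather than with some unrelated subsurface of $\partial\orbo(r;m)$.
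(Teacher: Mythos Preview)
Your argument for (2) is circular. Corollary~\ref{cor:hyperbolic} furnishes \emph{some} geometrically finite hyperbolic structure on $\interior\orbo(r;m)$, but nothing forces $P$ to be a cusp cross-section in it. Indeed $P$ is not a boundary component of $\orbo(r;m)$: it is a proper suborbifold of the single boundary component $\partial\orbo(r;m)\cong S^2(2,2,2,m)$, and the corresponding end of $\interior\orbo(r;m)$ may simply be a convex-cocompact end with no cusp at all. The hyperbolic structure in which $\pi_1(P)$ \emph{is} parabolic is produced only in Proposition~\ref{prop:hyperbolic-pared-manifod}, which is deduced \emph{from} the present lemma via the pared orbifold theorem. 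The paper instead proves (2) topologically: a putative compressing discal orbifold for $P$ must miss $\tau_+$ or $\tau_-$, can then be isotoped into $(B^3_+,t(\infty)\cup\tau_+,w_+)$, and is excluded by examining its intersection with the horizontal disk $D_h$ from the proof of Lemma~\ref{lem:irreducible}(3).

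Your argument for (3) has a subtler gap: ``peripheral'' means conjugate into the \emph{image} $j_*(\pi_1(\partial\orbo(r;m)))$ inside $\pi_1(\orbo(r;m))$, and a proper quotient of $\pi_1(S^2(2,2,2,m))$ need not be Fuchsian and could in principle contain a copy of $\ZZ^2$. The paper therefore treats separately the case that $j_*$ fails to be injective: then $\partial\orbo(r;m)$ compresses by the orbifold loop theorem, and an analysis of the resulting turnovers via Lemma~\ref{lem:irreducible} shows that $\pi_1(\orbo(r;m))$ is an amalgam of dihedral groups over $\ZZ/2\ZZ$, which visibly has no $\ZZ^2$.

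For (4), the capping-off strategy does not close. Since $\partial\orbo(r;m)\cong S^2(2,2,2,m)$ with $m\ge 3$, your $\hat A$ (a torus or a pillow $S^2(2,2,2,2)$) is never homeomorphic to $\partial\orbo(r;m)$ and hence is never boundary-parallel; the only live case is that $\hat A$ compresses. But a compressing discal orbifold for $\hat A$ need not have boundary isotopic to $\partial A$: on a torus it may be a meridian transverse to the longitudinal $\partial A$-curves, and on a pillow it may separate one cone point of the $A$-side from the other. In neither situation do you obtain a contradiction with the incompressibility of $P$ or with the essentiality of $\partial A$. The paper avoids this by a direct case analysis: when $A$ is an annulus it is disjoint from $\tau_+\cup\tau_-$ and can be pushed into a product neighborhood of the bridge sphere, where a boundary-compressing disk is found explicitly; when $A\cong D^2(2,2)$ it is disjoint from $\tau_-$, is isotoped into $(B^3_+,t(\infty)\cup\tau_+,w_+)$, and is handled by examining $|A|\cap D_h$.
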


\begin{proof}
{\rm
(1) This follows from Lemma ~\ref{lem:Haken} and Corollary ~\ref{cor:hyperbolic}.

(2) Suppose that $P$ is compressible.
Then there is a discal orbifold $(F,\partial F)$ properly embedded in
$(\orbo(r;m),P)$ such that $\partial F$ is a loop in $P$ parallel to $\partial P$.
Since $F$ has at most one cone point,
$|F|$ is disjoint from $\tau_+$ or $\tau_-$.
For simplicity, we assume that $|F|$ is disjoint from $\tau_-$.
(The other case is treated similarly.)
By using the fact that
$\partial F$ is parallel to
$\partial P$ in $\partial\orbo(r;n)$,
and the fact that
$(B^3_-,t(r)\cup \tau_-)$ is a relative regular neighborhood of
$\tau_-$ in $(S^3,K(r)\cup\tau_+\cup\tau_-)$,
we can see that $F$ is isotopic to a $2$-suborbifold
which is disjoint from the suborbifold $(B^3_-,t(r)\cup \tau_-,w_-)$.
Hence we may assume that
$F$ is contained in the interior of
the suborbifold $(B^3_+, t(\infty)\cup\tau_+,w_+)$.
Then, by looking at the intersection of $|F|$ with the disk $D_h$
as in the proof of Lemma ~\ref{lem:irreducible}(3),
we see that this cannot happen.

(3) Suppose that $\pi_1(\orbo(r;m))$ contains
a rank $2$ free abelian subgroup, $H$.
Then $H$ is conjugate to a subgroup of $j_*(\pi_1(\partial\orbo(r;m)))$
by Lemma ~\ref{lem:hyperbolic},
where $j$ is the inclusion.
If $j_*$ is injective, then $j_*(\pi_1(\partial\orbo(r;m)))\cong \pi_1(S^2(2,2,2,m))$
is isomorphic to a Fuchsian group and
hence it cannot contain a rank $2$ free abelian subgroup, a contradiction.
So, $j_*$ is not injective.
By the loop theorem for good orbifolds \cite[p.133]{Boileau-Porti},
$\partial \orbo(r;m)$ is compressible.
Let $F\cong D^2(d)$ be a compressing disk for $\partial \orbo(r;m)$.
Then $d=1$, $2$ or $m$, and
$\partial F$ is a loop in $\partial \orbo(r;m)$
separating the $4$ singular points into two pairs of singular points.
Thus the result of compression of $\partial \orbo(r;m)$ by $F$
is a union of two $2$-suborbifolds, $F_1\cong S^2(2,2,d)$ and $F_2\cong S^2(2,m,d)$.
If $d=1$, $F_2\cong S^2(2,m)$ is a bad $2$-suborbifold,
a contradiction to Lemma ~\ref{lem:irreducible}(1).
Hence $d=2$ or $m$, and therefore $F_1$ and $F_2$ are turnovers.
By Lemma ~\ref{lem:irreducible}(3), they must be inessential.
Since none of them is boundary parallel,
each $F_i$ is a spherical turnover
bounding a discal $3$-orbifold.
Note that the singular set of $\orbo(r;m)$ has exactly two vertices
and the boundaries of regular neighborhoods of the vertices are
$S^2(2,2,2)$ and $S^2(2,2,m)$ (see Proposition ~\ref{prop:quotient-Heckoid-orbifold}).
Hence we see $d=2$ and
$F_1$ and $F_2$ are the boundaries of regular neighborhoods of the two vertices.
Thus $\partial \orbo(r;m)$ is parallel to the boundary of the $3$-orbifold
obtained from the regular neighborhoods of the two vertices of the singular set
by joining them by a tube around the unique edge of the singular set (of index $2$)
joining the two vertices.
Thus $\pi_1(\orbo(r;m))$ is a free product
of the dihedral groups of orders $4$ and $2m$
with amalgamated subgroup isomorphic to $\ZZ/2\ZZ$.
It is easy to see that such a group cannot contain
a rank $2$ free abelian subgroup.
Hence,  $\pi_1(\orbo(r;m))$ does not contain
a rank $2$ free abelian subgroup.

(4) Let $(A,\partial A)$ be an annular $2$-suborbifold
properly embedded in $(\orbo(r;m),P)$
whose boundary rests on essential loops in $P$.

Suppose first that $A$ is an annulus.
Then $A$ is disjoint from $\tau_+\cup \tau_-$.
Since each component of $\partial A$ is parallel to $\partial P$
in $P\subset \partial\orbo(r;m)$,
we may assume as in (2) that
$A$ is embedded in a regular neighborhood
of the $2$-suborbifold of
$(S^3,K(r)\cup\tau_+\cup \tau_-,w)$
determined by the $2$-bridge sphere.
Thus $(A,\partial A)$
is regarded as a suborbifold of
$(\check\PConway\times [-1,1], P')$,
where $\check\PConway$ is
obtained from the Conway sphere $\PConway$
by removing an open regular neighborhood of a puncture
and filling in order $2$ cone points to the remaining punctures
(cf. Proof of Proposition ~\ref{prop:quotient-Heckoid-orbifold}),
and $P'$ is the product annulus $\partial \check\PConway\times [-1,1]$.
Consider a disk properly embedded in
$|\check\PConway|\times [-1,1]$
which contains the singular set.
By looking at the intersection of $A$ with the disk,
we can find a boundary compressing disk for $A$.
By the irreducibility of the complement of the singular set of
$\check\PConway\times [-1,1]$, this implies that
$A$ is parallel to an annulus in $P'\subset P$.

Suppose next that $A$ is homeomorphic to $D^2(2,2)$.
Then $A$ is disjoint from $\tau_-$,
which has index $m\ge 3$.
Since $\partial A$ is parallel to $\partial P$
in $P\subset \partial\orbo(r;m)$,
we see as in the above that
$A$ is contained in
the suborbifold $(B^3_+, t(\infty)\cup\tau_+,w_+)$.
By looking at the intersection of $|A|$ with the disk $D_h$
as in the proof of Lemma ~\ref{lem:irreducible}(3),
we can see that $A$ is parallel to the suborbifold of $P$
bounded by $\partial A$.
}
\end{proof}

Since $\pi_1(\orbo(r;m))$ is not virtually abelian
by Lemma ~\ref{lem:hyperbolic},
we obtain the following proposition by Lemma ~\ref{lem:pared_orbifold}
and by the orbifold theorem for
Haken pared orbifolds \cite[Theorem ~8.3.9]{Boileau-Porti}.

\begin{proposition}
\label{prop:hyperbolic-pared-manifod}
For a rational number $r$ and an integer $m\ge 3$,
the pared orbifold $(\orbo(r;m),P)$ is hyperbolic,
i.e.,
there is a geometrically finite hyperbolic $3$-orbifold $M$
such that for some $\delta>0$ and $\mu$,
$(\orbo(r;m),P)$ is homeomorphic to
\[
(\mathrm{thick}_{\mu}(C_{\delta}(M)),\partial \mathrm{thick}_{\mu}(C_{\delta}(M))\cap \mathrm{thin}_{\mu}(C_{\delta}(M)),
\]
where $C_{\delta}(M)$ is the closed $\delta$-neighborhood of the convex core
$C(M)$ of $M$,
and $\mathrm{thick}_{\mu}(C_{\delta}(M))$ and $\mathrm{thin}_{\mu}(C_{\delta}(M))$ are
$\mu$-thick part and $\mu$-thin part.
Here $\mu$ is chosen so that $\mathrm{thin}_{\mu}(C_{\delta}(M))$ consists of only cuspidal part.
\end{proposition}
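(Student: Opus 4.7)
The plan is to deduce the statement as a direct application of the orbifold theorem for Haken pared 3-orbifolds, namely \cite[Theorem~8.3.9]{Boileau-Porti}; the content of Lemmas~\ref{lem:Haken}, \ref{lem:hyperbolic}, and \ref{lem:pared_orbifold} has already been engineered to supply exactly the hypotheses this theorem requires, so the proof amounts to assembling them and invoking the theorem.

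First I would recall the full list of conditions needed to apply \cite[Theorem~8.3.9]{Boileau-Porti}, and check them one by one against the preceding lemmas. Lemma~\ref{lem:pared_orbifold} gives that $(\orbo(r;m),P)$ is a pared $3$-orbifold in the sense of \cite[Definition~8.3.7]{Boileau-Porti}: the underlying orbifold is irreducible and very good, the annular suborbifold $P\cong D^2(2,2)$ is incompressible, every rank $2$ free abelian subgroup of $\pi_1(\orbo(r;m))$ is conjugate into $\pi_1(P)$ (vacuously, since no such subgroup exists), and any annular $2$-suborbifold properly embedded in $(\orbo(r;m),P)$ with essential boundary on $P$ is parallel to $P$. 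Lemma~\ref{lem:Haken} supplies the Haken hypothesis. Finally, Lemma~\ref{lem:hyperbolic} provides the crucial non-exceptional condition, namely that $\pi_1(\orbo(r;m))$ is not virtually abelian, which is what prevents the orbifold theorem from returning a Euclidean or Seifert-fibered conclusion in place of a hyperbolic one.

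With these hypotheses verified, Theorem~8.3.9 of \cite{Boileau-Porti} then produces a hyperbolic $3$-orbifold $M$ and identifies $(\orbo(r;m),P)$ with the pair obtained from a $\delta$-neighborhood $C_\delta(M)$ of the convex core by taking its $\mu$-thick part and the cuspidal portion of its boundary, for a suitable $\delta>0$ and Margulis constant $\mu$. In particular, $P$ corresponds to the (rank one) cusp cross-sections, reflecting the fact that the peripheral subgroup $\pi_1(P)$ is realised by parabolic isometries; the additional statement that the resulting hyperbolic structure is geometrically finite is built into the conclusion of \cite[Theorem~8.3.9]{Boileau-Porti}.

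The substantive work is not in this last step but in the earlier verifications. The anticipated main obstacle lies in Lemma~\ref{lem:pared_orbifold}(3), where one must rule out all rank $2$ free abelian subgroups outside $\pi_1(P)$, and in Lemma~\ref{lem:irreducible}(4), where one must rule out essential orientable toric $2$-suborbifolds; these steps required a detailed combinatorial analysis using the horizontal disk $D_h$ and the structure of the $2$-bridge decomposition. Once those obstructions are cleared, the rest of the argument is a formal citation of the Haken orbifold theorem.
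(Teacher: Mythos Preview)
Your proposal is correct and follows essentially the same route as the paper: the paper's proof is a one-sentence invocation of \cite[Theorem~8.3.9]{Boileau-Porti}, using Lemma~\ref{lem:pared_orbifold} for the pared-orbifold conditions and Lemma~\ref{lem:hyperbolic} for the fact that $\pi_1(\orbo(r;m))$ is not virtually abelian. Your write-up simply makes explicit which hypotheses of the cited theorem are supplied by which lemma, and your closing paragraph about the substantive work lying in the earlier lemmas is apt commentary rather than a deviation in strategy.
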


\begin{proof}[Proof of Theorem ~\ref{thm.Kleinian_heckoid}]
By the above proposition,
there is a faithful discrete representation
$\rho:\pi_1(\orbo(r;m))\to PSL(2,\CC)$
which maps the conjugacy class represented by the loop $\partial P$
to a parabolic transformation.
Recall that the Heckoid group
$\Hecke(r;n)=\pi_1(\orbs(r;n))$ is a subgroup of $\pi_1(\orbo(r;m))$
of index $2$ or $4$
by Proposition ~\ref{prop:even_Heckoid_group} and Definition ~\ref{def:odd_Heckoid_group}
and that it is generated by two elements in the conjugacy class of $\partial P$.
Hence, the restriction of $\rho$ to the subgroup $\Hecke(r;n)$
gives the desired isomorphism from $\Hecke(r;n)$ to a geometrically finite Kleinian group
generated by two parabolic transformations.
\end{proof}

At the end of this section,
we prove the following proposition,
which illustrates a significant difference
between odd Heckoid groups and even Heckoid groups.

\begin{proposition}
\label{prop-not-one-relator}
No odd Heckoid group is a one-relator group.
\end{proposition}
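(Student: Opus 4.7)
The plan is to assume $\Hecke(r;n)$ admits a one-relator presentation and derive a contradiction by comparing the constraints on torsion in one-relator groups (B.~B.~Newman's theorem) with an upper bound on the orders of finite-order elements extracted from Proposition~\ref{prop:odd-Heckoid-orbifold}.

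First I would observe that $\Hecke(r;n)$ contains nontrivial torsion. By Proposition~\ref{prop:odd-Heckoid-orbifold}, the singular set of $\orbs(r;n)$ has edges of weight $m=2n\ge 3$ (the arc $\tau_-$) and of weight $2$ (a component of $J_2$). Since $\orbo(r;m)$ is hyperbolic by Corollary~\ref{cor:hyperbolic}, it is very good, and so is its finite cover $\orbs(r;n)$; hence the local groups at these singular edges inject into $\Hecke(r;n)=\pi_1(\orbs(r;n))$, giving cyclic subgroups of orders $m$ and $2$.

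Next I would invoke B.~B.~Newman's theorem on torsion in one-relator groups: if $G=\langle X\svert W\rangle$ is a one-relator group with nontrivial torsion, then $W$ is a proper power $S^k$ with $k\ge 2$ and $S$ not a proper power in the free group on $X$; moreover every torsion element of $G$ is conjugate to a power of $S$, so every torsion order divides $k$ and $G$ contains an element of order exactly $k$. Applied to $\Hecke(r;n)$, both $2$ and $m$ must divide $k$; since $m$ is odd, this forces $2m\svert k$, and hence $\Hecke(r;n)$ would contain a torsion element of order at least $2m$.

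The heart of the argument is then to show that every finite-order element of $\Hecke(r;n)$ has order at most $m$, which contradicts the existence of the order-$k$ element with $k\ge 2m$. Since $\orbs(r;n)$ is good, every torsion element of $\Hecke(r;n)$ is conjugate into a local group of $\orbs(r;n)$. At edge-points of the singular set the local group is cyclic of order $2$ or $m$. At an interior vertex of the singular set the local group is a finite spherical triangle group $(p,q,r)$ with $p,q,r\in\{2,m\}$ (a vertex incident to a weight-$\infty$ edge lies on $\partial|\orbs(r;n)|$ by Convention~\ref{conv:orbifold}, hence is not interior); the classification via $1/p+1/q+1/r>1$ leaves only $D_{2m}$, $D_4$, and, when $m=3$, $A_4$, each of which has maximum element order at most $m$. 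I expect this last vertex analysis to be the main (though rather mild) obstacle: it requires reading off the possible local weight-triples from the descriptions in Proposition~\ref{prop:odd-Heckoid-orbifold} (and Figures~\ref{odd-Heckoid-orbifold1} and~\ref{odd-Heckoid-orbifold2}) in both the knot and link cases, combined with the finite triangle-group enumeration above.
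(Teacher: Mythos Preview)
Your argument is correct and reaches the same contradiction as the paper, but via a different facet of the same one-relator torsion theorem. The paper simply observes from Proposition~\ref{prop:odd-Heckoid-orbifold} that the singular set of $\orbs(r;n)$ has two or four $1$-dimensional strata, and hence (using hyperbolicity, so that torsion in $\Gamma\cong\pi_1(\orbs(r;n))$ is elliptic with a fixed axis) that $\Gamma$ has at least two conjugacy classes of maximal finite cyclic subgroups; it then quotes \cite[Theorem~IV.5.2]{lyndon_schupp} to the effect that a one-relator group has at most one such class. You instead extract from the same theorem the divisibility statement (every torsion order divides $k$, and $k$ is realized), combine the coprimality of $2$ and $m$ to force $k\ge 2m$, and then rule this out by bounding all torsion orders by~$m$ via a case analysis of the local groups. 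Both routes rest on the same hyperbolicity input and the same Karrass--Magnus--Solitar/Lyndon--Schupp result (your attribution to B.~B.~Newman is slightly off); the paper's version is shorter because ``cyclic groups of orders $2$ and $m\ge 3$ are non-conjugate'' is immediate, whereas your route needs the extra upper bound on torsion orders. One small point to tighten: the assertion ``since $\orbs(r;n)$ is good, every torsion element is conjugate into a local group'' is not true for good orbifolds in general (think of a lens space); what you actually need---and have already established---is hyperbolicity, so that the universal cover is $\HH^3$ and every finite-order isometry has a fixed point. Your vertex enumeration is sound (only $(2,2,m)$ can actually occur, but the broader list does no harm).
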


\begin{proof}
{\rm
Consider an odd Heckoid orbifold $\orbs(r;n)$.
By Proposition \ref{prop:odd-Heckoid-orbifold},
the singular set of $\orbs(r;n)$ has two or four
$1$-dimensional strata.
Note that
the above proof of Theorem ~\ref{thm.Kleinian_heckoid} shows that
$\orbs(r;n)$ is hyperbolic,
and so the interior of $\orbs(r;n)$ is homeomorphic to
a hyperbolic orbifold $\HH^3/\Gamma$,
where $\Gamma$ is a Kleinian group isomorphic to $\pi_1(\orbs(r;n))$.
Hence $\Gamma\cong \pi_1(\orbs(r;n))$ has two or four
conjugacy classes of maximal finite cyclic subgroups, accordingly.
On the other hand,
any one-relator group has a unique maximal finite cyclic subgroup up to conjugacy
(see \cite[Theorem ~IV.5.2]{lyndon_schupp}).
Hence the odd Heckoid group $\Hecke(r;n)=\pi_1(\orbs(r;n))$ cannot be a one-relator group.
}
\end{proof}

\section*{Acknowledgements}
The second author would like to thank Gerhard Burde for drawing
his attention to the work of Riley ~\cite{Riley2}
when he was visiting Frankfurt in 1997.
The second author would also like to thank Ian Agol
for sending him a copy of the slide of his talk ~\cite{Agol}.
Both authors would like to thank Yeonhee Jang,
Makoto Ozawa and Toshio Saito
for helpful discussions and information
concerning Section ~\ref{sec:Kleinian-Heckod-groups}.
They would also like to thank the referee
for very careful reading and helpful comments.

\bibstyle{plain}

\bigskip

\end{document}